\crefname{hypothesis}{Hypothesis}{Hypotheses}
\title{Convergence Acceleration for Time-Dependent Parametric Multifidelity Models\thanks{Submitted for publication.
\funding{This research was partially sponsored by ARL under Cooperative Agreement Number W911NF-12-2-0023. The views and conclusions contained in this document are those of the authors and should not be interpreted as representing the official policies, either expressed or implied, of ARL or the U.S. Government. The U.S. Government is authorized to reproduce and distribute reprints for Government purposes notwithstanding any copyright notation herein. The second author is partially supported by DARPA TRADES HR0011-17-2-0016. The first and third authors are partially supported by AFOSR FA9550-15-1-0467. The third author is partially supported by DARPA EQUiPS N660011524053.}}}
\author{Vahid Keshavarzzadeh\thanks{Scientific Computing and Imaging Institute, University of Utah, Salt Lake City, UT
  (\email{vkeshava@sci.utah.edu}, \url{https://sites.google.com/view/vahid-keshavarzzadeh}).}
\and Robert M. Kirby\thanks{School of Computing, University of Utah, Salt Lake City, UT
  (\email{kirby@sci.utah.edu}, \url{https://www.cs.utah.edu/\string~kirby/}).}
\and Akil Narayan\thanks{Department of Mathematics, University of Utah, Salt Lake City, UT
  (\email{akil@sci.utah.edu}, \url{https://www.sci.utah.edu/\string~akil/}).}
}
\newcommand{\bs}[1]{\boldsymbol{#1}}
\begin{document}

\maketitle

\begin{abstract}
We present a numerical method for convergence acceleration for multifidelity models of parameterized ordinary differential equations. The hierarchy of models is defined as trajectories computed using different timesteps in a time integration scheme. 
  Our first contribution is in novel analysis of the multifidelity procedure, providing a convergence estimate. Our second contribution is development of a three-step algorithm that uses multifidelity surrogates to accelerate convergence: step one uses a multifidelity procedure at three levels to obtain accurate predictions using inexpensive (large timestep) models. Step two uses high-order splines to construct continuous trajectories over time. Finally, step three combines spline predictions at three levels to infer an order of convergence and compute a sequence transformation prediction (in particular we use Richardson extrapolation) that achieves superior error.  We demonstrate our procedure on linear and nonlinear systems of parameterized ordinary differential equations.
\end{abstract}

\begin{keywords}
  multifidelity algorithms, time-stepping schemes, convergence acceleration
\end{keywords}

\begin{AMS}
  65L99, 65B05
\end{AMS}

\section{Introduction}\label{sec:intro}

We investigate time-dependent models arising from parameterized ordinary differential equations (ODE). Such models arise in, for example, applied uncertainty quantification contexts. The following parameterized ODE defines the unknown $u$:
\begin{align}\label{eq:ode}
  \dfdx{u}{t}(t,k) &= f(t,u,k), & u(0, k) &= u_0(k),
\end{align}
where $u \in \R^M$ is a vector-valued state variable, $u_0 \in \R^M$ is a given initial condition, $k \in \R^d$ is a Euclidean parameter, and we take the time variable to range over $[0,T]$. The right-hand side function $f: [0,T] \times \R^M \times \R^d$ is also given. We assume the above system is well-posed for all $k$; in particular, we will codify some assumptions in Section \ref{sec2} so that the solution trajectory $u(\cdot,k)$ is smooth and so that standard discrete-time integration methods (e.g., multi-step and multi-stage methods) provide convergent approximations for fixed $T$.

The technique we adopt was proposed in \cite{Narayan14,zhu_computational_2014} and begins with the following approximation:
\begin{align}\label{eq:ut-approx}
  u(t,k) \approx \sum_{q=1}^n u(t, k_q) v_q(k),
\end{align}
where $n$ is small (in practice we use $n = \mathcal{O}(10)$), the $u(t,k_q)$ are discrete-time solution ``snapshots" at fixed parameter values computed using a refined timestep, and $v_q$ are computed from a coarse timestep approximation. The approximation above requires $n$ stored solutions computed using a refined timestep, and a single coarse timestep solution for each value of $k$. The parameter values $k_q$ and the parametric functions $v_q$ are computed via an analysis of coarse time discretizations. Thus, the entire procedure uses time discretizations at different discrete-time refinements (``fidelities"). Once the $n$ solutions $u(t,k_q)$ are stored, then evaluation of \eqref{eq:ut-approx} at a particular $k$ requires only one solution of the coarse timestep model.

Assuming solution trajectories are smooth, we supplement the multifidelity procedure above with two additional steps: Once the approximation above is constructed, we extend the discrete time solutions to continuous time via spline interpolation, and with spline representations on hand for each fidelity level we perform sequence transformations (e.g., Richardson Extrapolation) to accelerate convergence.

Our novel contributions are the derivation of mathematical error estimates that prove convergence of the approximation \eqref{eq:ut-approx}, and in development of computational algorithms that utilize spline representations and sequence transformations to accelerate convergence. An overview of the algorithm and our theoretical statements is provided below.

\subsection{Multifidelity algorithm overview}\label{sec:intro-mf}

We compute the coefficient functions $v_q$ in \eqref{eq:ut-approx} via a multifidelity procedure. Our models of different fidelities are outputs from discrete-time integration methods using different time steps. Let $r > 1$ be an integer, and let $h > 0$ be a stepsize at the coarsest level. We construct three discrete models, defined as
\begin{itemize}
  \item $u_1(\cdot,k)$ : discrete-time solution computed using a time step $h$, a ``low-fidelity" model. $u_1$ is inexpensive to compute for each $k$.
  \item $u_2(\cdot,k)$ : discrete-time solution computed using a time step $h/r$, a ``medium-fidelity" model. $u_2$ is moderately expensive to compute for each $k$.
  \item $u_3(\cdot,k)$ : discrete-time solution computed using a time step $h/r^2$, a ``high-fidelity" model. $u_3$ is expensive to compute for each $k$.
\end{itemize}
Our procedure performs an analysis of several trajectories of the inexpensive model $u_1$ to (i) identify the parameter values $\{k_q\}_{q=1}^n$ and (ii) compute the coefficient functions $v_q$ for use in \eqref{eq:ut-approx}. Precisely, the $v_q$ are defined as
\begin{align*}
  \left\{ v_1(k), \ldots, v_q(k) \right\} &= \argmin_{\bs{w} \in \R^q} \left\| u_1(\cdot,k) - \sum_{j=1}^q w_j u_1(\cdot, k_j) \right\| 
\end{align*}
where $\|\cdot\|$ is an appropriate $\ell^2$-type norm so that the $v_j$ can be easily computed as the solution to a linear least-squares problem given the data $u_1(t,k)$.\footnote{The values $\{v_1(k), \ldots, v_q(k)\}$ depend on the value of $q$, and so we are committing a small notational crime by not explicitly indexing the $v_j(k)$ by $q$.} The values $k_1, \ldots, k_n$ are sequentially chosen via the optimization
\begin{align}\label{eq:k-iteration}
  k_{q+1} = \argmax_{k} \left\| u_1(\cdot,k) - \sum_{j=1}^q v_j(k) u_1(\cdot, k_j) \right\|.
\end{align}
Computationally, the $\argmax$ is evaluated over a finite training set instead of a continuum. Since the above is an $\ell^2$-residual, in practice the solution to this greedy optimization problem on the finite training set is given by ordered pivots of a Cholesky or $Q R$ matrix factorization. (For the $Q R$ factorization, each column of the input matrix contains a snapshot.) Once the $k_q$ values have been computed, $n$ relatively expensive solution trajectories $u_2(t, k_q)$ and $u_3(t,k_q)$ are constructed, and the approximations
\begin{align*}
  \widehat{u}_2(t,k) &\coloneqq \sum_{q=1}^n u_2(t, k_q) v_q(k), & \widehat{u}_3(t,k) &\coloneqq \sum_{q=1}^n u_3(t, k_q) v_q(k)
\end{align*}
are built. Evaluation of $v_q$ at a fixed $k$ requires computation of the inexpensive model $u^L(t,k)$. The approximation above allows construction of $\widehat{u}^H(k)$ on the high-fidelity grid using computations on the low-fidelity grid for every value of $k$. We require only a one-time investment of $n$ solutions of $u^H$. When $n$ is small and $\widehat{u}^H$ is accurate, this can result in significant computational savings when analyzing the behavior of the family of solutions $u(\cdot, k)$ over the relevant range of $k$.

\subsection{Main contributions}

Our first contribution is the derivation of the error estimate
\begin{align}\label{eq:err-estimate}
  \left\| \mathcal{P}_n u(\cdot,k) - \widehat{u}_j(\cdot,k) \right\|_{\infty} &\lesssim C_1 h^p + C_2 (h/r^{j-1})^p,
\end{align}
where $p$ is the global truncation order of the discrete time integration method used to compute $u_j$, and $\mathcal{P}_n$ is a projection operator onto $\mathrm{span} \{ u(\cdot,k_q) \}_{q=1}^n$. The precise statement is given by Theorem \ref{thm:mf-error}. 

Our second contribution computationally effects convergence acceleration. The $j$-dependence in the error estimates above suggest that sequence transformation may be effective in accelerating convergence by eliminating the $j$-dependent error term. We would like to perform such an extrapolative transform at each instance of time, but the difficulty is that $u_2$ and $u_3$ ``live" on different grids. To rectify this situation, we perform spline approximations on each level, with the order of the spline matching $p$, the time integration order. The spline approximations then allow pointwise (in time) sequence transformation. We show that this strategy for convergence acceleration can be effective. The spline procedures and sequence transformation/extrapolation procedure is visually summarized in Figure~\ref{fig_schematic}. We observe in our examples that we can obtain $h^{p+1}$-order convergence in the accelerated solution despite the theoretical presence of $j$-independent terms in the estimate \eqref{eq:err-estimate}. This suggests that $C_1 \ll C_2$ can happen in practice.

\begin{figure}[h]
  \begin{center}
\includegraphics[width=0.8\textwidth]{./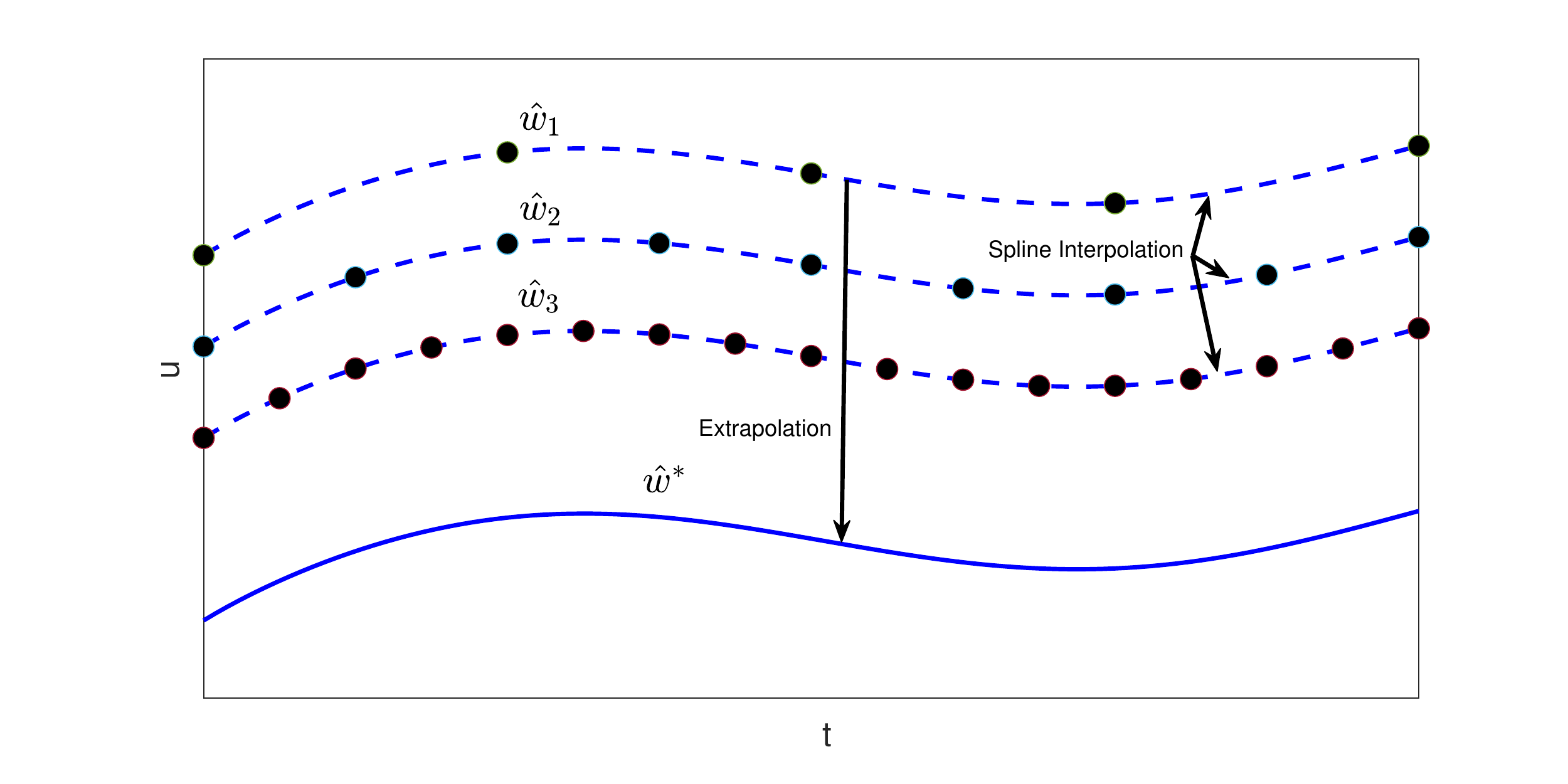}
\end{center}
\caption{\small{Schematic representation for convergence acceleration of time-dependent multifidelity models.}}\label{fig_schematic}
\end{figure}

The paper is organized as follows. Section \ref{sec2} introduces notation and the models of differing fidelities. Section \ref{sec3} describes the mathematical multifidelity procedure and contains our main error estimate, Theorem \ref{thm:mf-error}. Section~\ref{sec4} discusses convergence acceleration using spline interpolants and sequence transformations. Section \ref{sec:complexity} summarizes the entire algorithm. Finally, Section~\ref{sec:results} presents numerical examples for linear and nonlinear ODEs.

\section{Notation and setup}\label{sec2}


\subsection{Parameterized ODE solutions}

\begin{table}
  \begin{center}
  \resizebox{\textwidth}{!}{
    \renewcommand{\tabcolsep}{0.4cm}
    \renewcommand{\arraystretch}{1.3}
    {\scriptsize
    \begin{tabular}{@{}cp{0.8\textwidth}@{}}
      \toprule
      $t$, $T$ & Time variable $t$ taking values in $[0, T]$ \\
      $k$, $\mathcal{K}$ & Parameter value $k$ taking values in $\mathcal{K} \subset \R^d$ \\
      $M$ & Dimension of vector-valued solutions to an ODE \\
      $u(t,k)$ & $\R^M$-valued solution to a $k$-parameterized ODE at time $t$. The trajectory satisfies $u(\cdot,k) \in H$\\
      $H$ & Hilbert space $H$ containing solution trajectories, $u(\cdot, k) \in H$\\
      $h$, $N$ & Coarse timestep $h$, with $T = N h$\\
      $[N]$ & The set $\{0, 1, \ldots, N\}$ serving as indices for discrete times.\\
      $r$, $h_j$, $N_j$ & Integer $r \geq 2$ defining time step $h_j = h/r^{j-1}$ for ``level" $j$ approximation, using $N_j = N r^{j-1}$ equidistant time steps to reach $T$\\
      $H_j$ & Hilbert space containing $h_j$-discretized solution trajectories \\
      $u_j(i,k)$ & $\R^{M}$-valued discrete solution at time $t = i h_j$ computed using an integration method with timestep $h_j$, with $u_j(k) = u_j(\cdot,k) \in H_j$\\
      $p$, $P$ & Time integration global truncation error order $p$ and $(P+1)$-point Newton-Cotes quadrature rule \\
      $\mathcal{K}_n$ & Collection of $n$ points in $\mathcal{K}$ \\
      $\bs{G}$, $\bs{G}_j$ & $n \times n$ Gramian matrices formed from solution trajectories for $k \in \mathcal{K}_n$\\
      $\mathcal{V}$, $\mathcal{V}_j$ & Manifold of solutions for all $k \in \mathcal{K}$. Subsets of $H$ and $H_j$, respectively.\\
    \bottomrule
    \end{tabular}
  }
    \renewcommand{\arraystretch}{1}
    \renewcommand{\tabcolsep}{12pt}
  }
  \end{center}
  \caption{Notation used throughout this article.}\label{tab:notation}
\end{table}

We refer to Table \ref{tab:notation} for a summary of much of the notation in this article. The parameterized ODE \eqref{eq:ode}, where $u(t,k) \in \R^M$ depends on the parameters $k \in \mathcal{K} \subset \R^d$. We use $u^{(m)}$, $m = 1, \ldots, M$, to denote the components of $u$. We assume $\mathcal{K}$ is a compact set in $\R^d$ and that, given some terminal time $T > 0$, then the trajectory $u(\cdot,k)$ is smooth uniformly in $k$:
\begin{assumption}\label{assumption:core-assumption}
  The solution trajectories $u(\cdot, k)$ exist and are unique on $[0,T]$ for each $k \in \mathcal{K}$. Furthermore, the function $f(t,u)$ is smooth enough so that for some integer $p \geq 2$,
  \begin{align}\label{eq:Us-def}
    \max_{k \in \mathcal{K}} \max_{1 \leq m \leq M} \sup_{t \in [0,T]} \left| \frac{\partial^s}{\partial t^s} u^{(m)}(t,k)\right| \eqqcolon U^{(s)} &< \infty, & 0 \leq s & \leq p
  \end{align}
\end{assumption}
The above is relatively restrictive, requiring smoothness (up to order $p$) of the solution trajectories, with derivative bounds independent of $k$. 
The value of $p$ required is the convergence order of a time integration scheme.
The condition \eqref{eq:Us-def} allows us to conclude that the solution $u(\cdot, k): [0, T] \rightarrow \R^M$ to \eqref{eq:ode} is at least continuously differentiable on the compact interval $[0,T]$. Therefore, we have
\begin{align*}
  u(\cdot, k) &\in H & H \coloneqq L^2\left([0, T]; \R^M\right) &= \left\{ v: [0, T] \rightarrow \R^M \;\; \big| \;\; \left\| u \right\|_H < \infty \right\},
\end{align*}
with the inner product and norm
\begin{align*}
  \left\langle u, v \right\rangle &= \frac{1}{M T} \sum_{m=1}^M \int_0^T u^{(m)}(t) v^{(m)}(t) \dx{t}, & \| u\|^2 = \left\langle u, u \right\rangle
\end{align*}
where $u^{(m)}$ and $v^{(m)}$ are the components of the $M$-vectors $u$ and $v$, respectively. Note that we normalize the inner product by $1/(M T)$.

We are interested in computing approximations to the family of solutions
\begin{align*}
  \mathcal{V} = \left\{ u(\cdot, k) \;\;\big|\;\; k \in \mathcal{K} \right\} \subset H.
\end{align*}
More precisely, given $k \in \mathcal{K}$, we wish to construct an efficient and accurate approximation to the solution map $k \mapsto u(\cdot, k)$.

We require one additional assumption on the function $f$, namely that it is continuous in $k$.
\begin{assumption}\label{assumption:f-continuous}
  For every $t \in [0, T]$ and $u \in \R^M$, the function $f(t, u, k)$ is $k$-continuous for $k \in \mathcal{K}$. Also, the initial data $u_0(k)$ is continuous for each $k \in \mathcal{K}$.
\end{assumption}

\subsection{Time integration}\label{sec:time-integration}
We assume we have a stable and convergent numerical method to compute solutions to \eqref{eq:ode} for all $k \in \mathcal{K}$ over $t \in [0,T]$ that uses $N \in \N$ timesteps to reach $t = T$. (E.g., we assume $N$ is large enough for stability of explicit time integration methods uniformly in $k$.) Thus, define $h \coloneqq T /N$ as the timestep size, and let $h_j$, $j \in \N$, be a geometric sequence of timestep sizes
\begin{align}\label{eq:j-def}
  h_j &= \frac{h}{r^{j-1}}, & N_j \coloneqq N r^{j-1},
\end{align}
where $r \geq 2$ is an integer. We will use the solutions computed with timesteps $h_j$ (i.e., $N_j$ total timesteps) as our models of different fidelity. Suppose the rate of convergence of our time integration method is $p \geq 1$, and let $u_j(i,k) \in \R^M$ be the discrete solution at discrete time $i h_j$ ($i = 0, \ldots, N_j$) computed using the time integration method with a time step of $h_j$.

Let $[N] \coloneqq \{ 0, 1, \ldots, N\}$. An accurate time integration method produces vectors $u_j(i,k) \in \R^M$ satisfying
\begin{align*}
  u_j(i, k) &\approx u(i h_j, k), & i &\in [N_j], \hskip 8pt k \in \mathcal{K}
\end{align*}
We will primarily be interested in the values $j=1, 2$, and $3$, representing a three-level hierarchy of solutions.
We emphasize that, fixing $k$, the exact ODE solution $u(\cdot,k)$ is a function whose domain is the continuum $[0,T]$, but the discrete solution $u_j(\cdot, k)$ is a function whose domain is the finite set of indices $[N_j]$. The following is a standard estimate for the global truncation error committed by a time integration method of global order $p$ when applied to ODEs with smooth coefficients.
\begin{lemma}\label{lemma:global-truncation-error}
  Let $u_j^{(m)}(i, k)$, $k \in \mathcal{K}$, $i \in [N_j]$, be the $m$-th component at time index $i$ of the discrete solution computed using an order-$p$ time integration method with timestep $h_j$. Under Assumption \ref{assumption:core-assumption} with smoothness order $p$, then for $h_j$ small enough,
\begin{align*}
  \max_{m=1, \ldots, M} \max_{i = 0, \ldots, N r^{j-1}} \left| u^{(m)}(i h_j, k) - u^{(m)}_j(i, k) \right| \leq C(T,L) h_j^{p} = \frac{C(T,L)}{r^{j-1}} h^p,
\end{align*}
where $C(T,L)$ usually depends exponentially both on $T$ and bounds on derivatives of $f$.
\end{lemma}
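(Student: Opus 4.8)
The plan is to prove Lemma \ref{lemma:global-truncation-error} as a standard global error estimate, combining a per-step \emph{local} truncation bound with a discrete stability (Gr\"onwall) argument, and then substituting $h_j = h/r^{j-1}$ at the very end. I would fix a component index $m$ and a parameter value $k$, write $e_i \coloneqq u^{(m)}(i h_j, k) - u^{(m)}_j(i,k)$ for the error at time index $i$, and aim to bound $\max_i |e_i|$ by a constant that is uniform in $m$ and $k$.

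First I would establish the local truncation error. Expressing the one-step (or multi-step) scheme through its increment function $\Phi$, I would insert the exact solution into the scheme and Taylor-expand in $t$ about each node. Because the method has global order $p$, consistency forces the resulting residual $\tau_i$ to satisfy $|\tau_i| \le C_\tau\, h_j^{p+1}$, where $C_\tau$ depends only on the method coefficients and on $\sup_t |\partial_t^s u^{(m)}|$ for $s \le p+1$. This is precisely where Assumption \ref{assumption:core-assumption} enters: the uniform-in-$k$ derivative bounds $U^{(s)}$ render $C_\tau$ independent of $k$ and $m$, which is exactly what is needed for the final estimate to hold uniformly over $\mathcal{K}$.

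Next I would convert the local error into a global one. Subtracting the numerical recursion from the exact-but-perturbed recursion yields a difference equation for $e_i$, and using the Lipschitz continuity of $\Phi$ (inherited from a Lipschitz constant $L$ of $f$, uniform in $k$ by smoothness of $f$ on the compact set $\mathcal{K}$) I would obtain a scalar recursion of the form $|e_{i+1}| \le (1 + h_j L')\,|e_i| + |\tau_i|$ with $L'$ proportional to $L$. A discrete Gr\"onwall inequality (starting from $e_0 = 0$) then gives $|e_i| \le \frac{C_\tau}{L'} h_j^{p}\bigl( (1 + h_j L')^{i} - 1 \bigr) \le \frac{C_\tau}{L'}\bigl(e^{L' T} - 1\bigr)\, h_j^{p}$, using $i h_j \le N_j h_j = T$. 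Setting $C(T,L) \coloneqq \frac{C_\tau}{L'}\bigl(e^{L' T} - 1\bigr)$ makes the exponential dependence on $T$ and on the derivative/Lipschitz bounds explicit, and substituting $h_j = h/r^{j-1}$ produces the claimed $\frac{C(T,L)}{r^{j-1}} h^p$.

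The main obstacle is the multi-step case. The clean one-step Gr\"onwall recursion above is immediate for multi-stage (Runge--Kutta) methods, but for multi-step methods the error propagation is governed by the companion matrix of the characteristic polynomial, and one must additionally invoke zero-stability (the root condition) to guarantee that powers of this matrix are bounded uniformly in the number of steps. With that power-boundedness in hand the argument proceeds as before, but it requires care in setting up the vector-valued recursion and in controlling the startup errors of the first few steps. Since the result is entirely classical, I would, in the interest of brevity, alternatively cite a standard reference for whichever method class is in use rather than reproduce the multi-step bookkeeping in full.
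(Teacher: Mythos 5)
The paper provides no proof of this lemma---it is quoted as a standard global truncation error estimate---and your consistency-plus-discrete-Gr\"onwall argument (with zero-stability and companion-matrix power-boundedness for the multi-step case) is precisely the classical proof it implicitly invokes; indeed, your suggested fallback of simply citing a standard reference is what the paper in fact does. The only caveat: your local truncation bound uses solution derivatives up to order $p+1$, while Assumption \ref{assumption:core-assumption} guarantees bounds only up to order $p$---but this looseness is inherited from the paper's own hypothesis (``Under Assumption \ref{assumption:core-assumption} with smoothness order $p$'') rather than being a flaw in your approach.
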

Note that $C(T,L)$ does not depend on $k$ due to the assumption \eqref{eq:Us-def}.
We can define discretized Hilbert spaces $H_j$ that contain the discrete-time solutions for each $k \in \mathcal{K}$:
\begin{align}\label{eq:H-def}
  H_j &= \ell^2\left( [N_j]; \R^M \right), & \left\langle u, v \right\rangle_{j} &= \sum_{i=[N_j]} w_{j,i} v(i)^T u(i), &
  u(i) \in \R^M
\end{align}
where $w_j$ is a vector of $N_j+1$ weights. 
We assume the $w_j$ weight vectors are positive for each $j$, and that the entries sum to $1$ to reflect the $1/(M T)$ normalization in \eqref{eq:H-def}. We will make precise choices for these weights in the next section. The discrete solutions induce discretized versions of the compact manifold $\mathcal{V}$:
\begin{align*}
  \mathcal{V}_j = \left\{ u_j(\cdot,k) \;\; \big|\;\; k \in \mathcal{K} \right\} \subset H_j.
\end{align*}
The convergence in Lemma \ref{lemma:global-truncation-error} also implies that the discrete solution manifolds have bounded elements. In particular, since $\mathcal{K}$ is compact and $k\mapsto u(\cdot, k)$ is continuous by Assumption \ref{assumption:f-continuous}, we have
\begin{align}\label{eq:U-def}
  U_j = \max_{k \in \mathcal{K}} \| u_j(\cdot,k)\|_j < \infty \hskip 10pt \Longrightarrow \hskip 10pt U \coloneqq \max \left\{ \max_{k \in \mathcal{K}} \|u(\cdot,k)\|, \;\; \max_{j \geq 1} U_j \right\} < \infty,
\end{align}
where $\{U_j\}_{j \geq 1}$ is a positive and convergent (hence bounded) sequence by Lemma \ref{lemma:global-truncation-error}.

\subsection{Norms and inner products on $H_j$}
The discussion at the beginning of Section \ref{sec:time-integration} constructs the functions $u_j(k)$ so that they represent approximations to the exact solution trajectory $u(\cdot, k)$ evaluated on an equispaced grid. For our procedure, we require the ability to approximate inner products on $H$ using this discrete grid up to the order of accuracy $p$ of the time integration scheme. 
For this purpose, we turn to a composite Newton-Cotes quadrature rule.
A $(P+1)$-point closed Newton-Cotes rule on the interval $[a, b]$ has the form \begin{align}\label{eq:newton-cotes} \int_a^b f(x) \dx{x} &\approx \sum_{s=0}^{P} \widetilde{w}_s f(x_s), & x_s &= a + (b-a) \frac{s}{P}, \end{align}
with known, explicitly computable weights $\widetilde{w}_j$. For $P + 1 \leq 7$, the weights are all positive. For a function $g$ whose $(q-1)$th derivative is bounded on $[a,b]$, the rule has accuracy given by:
\begin{align}\label{eq:nc-error}
  \left| \int_a^b g(x) \dx{x} - \sum_{s=0}^P \widetilde{w}_s x_s \right| &= \mathcal{O}((b-a)^q), &q = q(P) &= 2 \left\lfloor \frac{P}{2} \right\rfloor + 3.
\end{align}
Now set $q = p+1$, and assume the order-$p$ smoothness as stated in Assumption \ref{assumption:core-assumption}. Choose $P(p)$ as
\begin{align}\label{eq:rp-relation}
  P(p) = \max \left\{ 2 \left\lfloor \frac{p-1}{2}\right\rfloor, 1 \right\}.
\end{align}
Our choice of $P(p)$ above is the smallest $P$ satisfying $q(P) = p+1$, and thus with this choice integrating a solution trajectory under Assumption \ref{assumption:core-assumption} achieves order of accuracy $p+1$ on individual subintervals of $[0,T]$ so that the composite rule has order-$p$ accuracy.

We can now define the weights $w_{j,i}$ defining the inner product on $H_j$. Assume that the number of coarse-level timesteps, $N$, is divisible by $P(p)$. Then for any $j \geq 1$ the interval $[0,T]$ can be divided up into $N_j/P$ subintervals, and a composite Newton-Cotes rule over $[0, T]$ acting on a function $v \in H$
\begin{align}\label{eq:composite-rule}
  Q_j[v] &\coloneqq \sum_{i=1}^{N_j/P} \sum_{s=0}^{P} \widetilde{w}_{j,i,s} v(((i-1)P + s + 1)h_j) \coloneqq \sum_{i \in [N_j]} w_{j,i} v_i, & \sum_{i \in [N_j]} w_{j,i} = 1 
\end{align}
where $\{\widetilde{w}_{j,i,s}\}_{s=0}^{P}$ are the weights $\{\widetilde{w}_s\}_{s=0}^{P}$ in \eqref{eq:newton-cotes} rescaled for the $i$th subinterval of $[0,T]$. The condition that the weights $w_{j,i}$ sum to $1$ is required for consistency of the $H_j$ discrete inner products with respect to the $H$ continuous inner product. 
Assuming $P + 1 \leq 7$, we use the vector of positive weights $w_j \in \R^{N_j+1}$ in this composite rule to define the norm and inner product on $H_j$ via the expression \eqref{eq:H-def}.

The case $P + 1 > 7$ only becomes relevant when we are using a time integration method with order $p$ equal to 7 or greater; this situation rarely happens in practice, so we hereafter assume $p \leq 6$ and $P + 1 \leq 7$. This quadrature rule has order of accuracy $p$ over the whole interval, which we codify below for the special case that we require.
\begin{lemma}[Composite Newton-Cotes accuracy]\label{lemma:nc-error}
  Let $u_{(j)}(\cdot,k) \in H_j$ be a sampling at the timesteps for level $j$ of an element in the solution $u(\cdot, k)$, i.e.,
  \begin{align}\label{eq:wj-def}
    u_{(j)}(i,k) &= u( i h_j,k ), & i &\in [N_j], \;\; k \in \mathcal{K}
  \end{align}
  Given $p \geq 1$, if $P$ is chosen as in \eqref{eq:rp-relation}, then under the conditions of Assumption \ref{assumption:core-assumption},
  \begin{align}\label{eq:nc-u-error}
    \left| \left\langle u(\cdot,k), u(\cdot,k') \right\rangle - \left\langle u_{(j)}(\cdot,k) u_{(j)}(\cdot,k') \right\rangle_j \right|
    &\leq C p^{p} h_j^p, & k, k' &\in \mathcal{K},
  \end{align}
  where $C$ is independent of $k$ and $k'$.
\end{lemma}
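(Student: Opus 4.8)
The plan is to reduce the statement to a composite Newton--Cotes quadrature error for $M$ scalar integrands and then accumulate the local errors. First I would observe that, component by component, the discrete inner product $\langle u_{(j)}(\cdot,k), u_{(j)}(\cdot,k')\rangle_j$ is exactly the composite Newton--Cotes rule \eqref{eq:composite-rule} applied to the products $g^{(m)}_{k,k'}(t) \coloneqq u^{(m)}(t,k)\,u^{(m)}(t,k')$ sampled on the level-$j$ grid, while the continuous inner product $\langle u(\cdot,k), u(\cdot,k')\rangle$ is the exact, identically normalized integral of the same products. Since the quadrature weights $w_{j,i}$ and the $1/(MT)$ factor are shared by both expressions, their difference collapses, after summing over the $M$ components, to the composite quadrature error incurred by the rule on the $M$ scalar functions $g^{(m)}_{k,k'}$. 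Thus it suffices to bound this error uniformly in $k$, $k'$, and $m$.

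Next I would accumulate the local errors across subintervals. The rule is the composite of a $(P+1)$-point closed Newton--Cotes rule over the $N_j/P$ subintervals of $[0,T]$, each of width $Ph_j$. With $P = P(p)$ chosen as in \eqref{eq:rp-relation}, the underlying $(P+1)$-point rule integrates polynomials of degree at most $p-1$ exactly, so the matched-smoothness case of \eqref{eq:nc-error} (with $q = p+1$) bounds the local error on each subinterval by a constant times $(Ph_j)^{p+1}\,\|(g^{(m)}_{k,k'})^{(p)}\|_{\infty}$; crucially this uses only the $p$-th derivative of the integrand, exactly the smoothness Assumption \ref{assumption:core-assumption} affords. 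Summing over the $N_j/P$ subintervals and using $N_j h_j = T$ yields a composite bound proportional to $(N_j/P)(Ph_j)^{p+1} = T\,P^{p}h_j^{p}$. Since $P(p) \leq p$, the factor $P^{p} \leq p^{p}$ produces the announced $p^{p}h_j^{p}$ scaling.

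It then remains to bound $\|(g^{(m)}_{k,k'})^{(p)}\|_{\infty}$ uniformly in $k$, $k'$, and $m$, which is the step that delivers the $k,k'$-independence claimed for the constant $C$. By the Leibniz rule, $(g^{(m)}_{k,k'})^{(p)} = \sum_{s=0}^{p}\binom{p}{s}\,\partial_t^{s}u^{(m)}(\cdot,k)\,\partial_t^{p-s}u^{(m)}(\cdot,k')$, and each factor is controlled by the $k$- and $m$-independent constants $U^{(s)}$ from \eqref{eq:Us-def}; hence $\|(g^{(m)}_{k,k'})^{(p)}\|_{\infty} \leq \sum_{s=0}^{p}\binom{p}{s}U^{(s)}U^{(p-s)}$, a bound independent of $k$, $k'$, and $m$. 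Folding this quantity together with $T$ and the Newton--Cotes weight constants into a single constant $C$ (independent of $k$ and $k'$, though permitted to depend on $p$) and averaging over the $M$ components finishes the estimate.

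I expect the main obstacle to lie in the careful bookkeeping of the second step rather than in any deep difficulty. Two points require attention: first, that rescaling the closed Newton--Cotes rule to subintervals of width $Ph_j$ is precisely what injects the $P^{p}$ (hence $p^{p}$) factor, so the telescoping $N_j h_j = T$ must be tracked exactly; and second, that when $P(p)$ makes the underlying rule formally higher than order $p+1$ (as happens for odd $p$), one must use the rule only at order $p+1$, since the product $g^{(m)}_{k,k'}$ is merely $C^{p}$ and its higher derivatives need not be bounded. Matching the available smoothness to the invoked order of \eqref{eq:nc-error} is the one place where a naive application could overreach.
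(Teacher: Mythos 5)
Your proposal is correct and follows essentially the same route as the paper's proof: reduce the difference of inner products to the composite Newton--Cotes error on the products $u^{(m)}(\cdot,k)u^{(m)}(\cdot,k')$, bound each local error by $(Ph_j)^{p+1}$ times the sup of the $p$-th derivative, sum over the $N_j/P$ subintervals using $N_jh_j = T$ and $P(p)\leq p$ to extract the $p^ph_j^p$ factor, and control the $p$-th derivative of the product uniformly in $k,k'$ via the Leibniz rule and the constants $U^{(s)}$ from Assumption \ref{assumption:core-assumption}. Your added caution about invoking \eqref{eq:nc-error} only at order $q=p+1$ (since the integrand is merely $C^p$) is a point the paper handles implicitly by its choice of $P(p)$, but it does not change the argument.
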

\begin{proof}
  Each individual Newton-Cotes rule spans a normalized interval of length $P h_j$ and is accurate to order $p+1$. Relation \eqref{eq:rp-relation} implies that $P \leq p-1$. Under Assumption \ref{assumption:core-assumption}, the integrand $u(\cdot,k) u(\cdot,k')$ has bounded derivatives of order $p = q-1$ on $[0,T]$. Thus for one component of the integrand, the Newton-Cotes rule commits an error scaling like,
  \begin{align*}
    \sum_{i=1}^{N_j/P} (P h_j)^{p+1}/(M T) = \frac{1}{T} N_j h_j (P h_j)^{p} \leq \frac{1}{M} p^p h_j^p,
  \end{align*}
  where we have used \eqref{eq:rp-relation} to conclude that $P(p) \leq \max\{ p-1, 1\} \leq p$, and the $1/(M T)$ factor arises because of the $1/(M T)$ normalization for the weights in \eqref{eq:composite-rule}. Summing over the $M$ components results in the estimate \eqref{eq:nc-u-error}. The constant $C$ appearing in the conclusion is a $k$-independent bound on the order-$p$ derivatives of the integrands, for which a loose bound is
  \begin{align*}
    \left| \frac{\partial^p}{\partial t^p} \left( u^{(m)}(t,k) u^{(m)}(t,k) \right) \right| \leftstackrel{\eqref{eq:Us-def}}{\leq} \sum_{r=0}^{p} \left(\begin{array}{c} p \\ r \end{array}\right) U^{(r)} U^{(p-r)}.
  \end{align*}
\end{proof}

\section{Time-dependent multifidelity approximations}\label{sec3}

The analytical result in this section is our first novel contribution: a proof that the multifidelity approximations $\widehat{u}_j$ (formally defined in this section) converge to an appropriate quantity as $h \rightarrow 0$. Our major result stating this is Theorem \ref{thm:mf-error}.

\subsection{Projection approximations}
Let $\mathcal{K}_n \subset \mathcal{K}$ be a set of $n \in \N$ points in parameter space:
\begin{align*}
  \mathcal{K}_n = \left\{ k_1, \ldots, k_n \right\} \subset \mathcal{K}.
\end{align*}
For a fixed $n$, we define subspaces spanned by $u(\cdot,\mathcal{K}_n)$ and $u_j(\cdot, \mathcal{K}_n)$,
\begin{align*}
  \mathcal{W}_n &= \mathrm{span}\left\{ u(k_1), \ldots, u(k_n) \right\} \subset H, &
  \mathcal{W}_{n,j} &= \mathrm{span}\left\{ u_j(k_1), \ldots, u_j(k_n) \right\} \subset H_j.
\end{align*}
We now define $\mathcal{P}_n$ and $\mathcal{P}_{n,j}$ as the orthogonal projectors onto $\mathcal{W}_n$ and $\mathcal{W}_{n,j}$, respectively:
\begin{align*}
  \mathcal{P}_n &: H \rightarrow \mathcal{W}_n, & \mathcal{P}_{n,j} &: H_j \rightarrow \mathcal{W}_{n,j}.
\end{align*}
We will show that the multifidelity approximation that we form converges to $P_n u$.
The approximation error committed by projecting the solution set $\mathcal{V}$ onto the subspace $\mathcal{W}_{n}$ is
\begin{align}\label{eq:e-def}
  e_{n}(\mathcal{V}) \coloneqq e\left(\mathcal{V}, \mathcal{W}_{n} \right) = \sup_{v \in \mathcal{V}} \left\| v - \mathcal{P}_{n} v \right\| = \sup_{k \in \mathcal{K}} \left\| u(k) - \mathcal{P}_{n} u(k) \right\|.
\end{align}
 The best possible error in approximating the true solution space $\mathcal{V}$ by an $n$-dimensional projection is,
\begin{align*}
  d_{n}(\mathcal{V}) = \inf_{\mathcal{W} \subset H,\; \dim W = n}\; e (\mathcal{V}, \mathcal{W}),
\end{align*}
The above is formulated on the continuous space $H$, which is not directly computable since the exact solutions $u(\cdot,k)$ are usually not available. Similar quantities can be defined to measure the error committed on the discrete level, e.g., the $H_j$-error committed by approximating $\mathcal{V}_j$ with $\mathcal{W}_{n,j}$.

On the discrete spaces $H_j$, one way to construct a sequence $k_1, k_2, \ldots$ for which $\mathcal{W}_{n,j}$ well-approximates $\mathcal{V}_j$ is by greedy procedure, in particular given $k_1, \ldots, k_n$, by picking $k_{n+1}$ as the $k$ value that maximizes a discrete version of the supremum argument in \eqref{eq:e-def}. Optimization on this discrete level can provide errors similar to optimizing over the continuous level. Below we cite a sufficient condition on the $j=1$ level.

\begin{lemma}[\cite{devore_greedy_2013}]\label{lemma:n-width}
  Suppose the parameter values $k_1, k_2, \ldots$ are chosen via the greedy procedure
  \begin{align}\label{eq:k-choice}
    k_{q+1} &= \argmax_{k \in \mathcal{K}} \left\| u_1(k) - \mathcal{P}_{q,1} u_1(k) \right\|_{1}, & q &\geq 0,
  \end{align}
  with $\mathcal{P}_{0,1}$ the zero operator. If there is a positive constant $\gamma > 0$ such that
  \begin{align}\label{eq:n-width-assumption}
    \frac{\max_{k \in \mathcal{K}} \| u_1(k) - \mathcal{P}_{q,1} u_1(k) \|_1}{\max_{k \in \mathcal{K}} \| u(k) - \mathcal{P}_{q} u(k) \|_1} \geq \gamma &> 0, & q &\geq 0
  \end{align}
  then,
  \begin{align*}
    e_{2 q}\left(\mathcal{V}\right) &\leq \frac{\sqrt{2}}{\gamma} \sqrt{d_q \left(\mathcal{V}\right)}, & q &\geq 1
  \end{align*}
\end{lemma}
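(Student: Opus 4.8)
The plan is to recognize this as the classical Hilbert-space comparison between (weak) greedy widths and Kolmogorov widths due to \cite{devore_greedy_2013}, and then to verify that the surrogate-driven selection \eqref{eq:k-choice} together with the ratio bound \eqref{eq:n-width-assumption} places us in the weak-greedy setting on the true manifold $\mathcal{V}$. The greedy step picks $k_{q+1}$ as the maximizer of the level-$1$ residual $\|u_1(k)-\mathcal{P}_{q,1}u_1(k)\|_1$; I would first argue that this same $k_{q+1}$ is a valid weak-greedy choice for $\mathcal{V}$ in $H$, i.e.\ that $\|u(k_{q+1})-\mathcal{P}_q u(k_{q+1})\| \ge \gamma\, \sup_{k}\|u(k)-\mathcal{P}_q u(k)\| = \gamma\, e_{q}(\mathcal{V})$. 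This is precisely the content of \eqref{eq:n-width-assumption}: the numerator $\max_k\|u_1(k)-\mathcal{P}_{q,1}u_1(k)\|_1$ is attained at $k_{q+1}$ by the $\argmax$ in \eqref{eq:k-choice}, while the denominator is the supremal $H$-residual $e_q(\mathcal{V})$, so the hypothesis bundles the weak-greedy constant into $\gamma$. Once the weak-greedy condition with constant $\gamma$ is established, the conclusion follows from the known Hilbert-space estimate $e_{2q}(\mathcal{V}) \le \tfrac{\sqrt 2}{\gamma}\sqrt{d_q(\mathcal{V})}$.

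For completeness I would sketch the algebraic core of that estimate. Order the greedily selected trajectories and apply Gram--Schmidt to produce an orthonormal system $\{\phi_i\}$ spanning $\mathcal{W}_{n}$; writing $u(k_j)=\sum_{i\le j}a_{ij}\phi_i$ gives an upper-triangular coordinate matrix $A=(a_{ij})$ whose diagonal entries are the successive projection errors, $a_{jj}=\|u(k_j)-\mathcal{P}_{j-1}u(k_j)\|$. The weak-greedy property forces $a_{jj}\ge \gamma\, e_{j-1}(\mathcal{V})$, and the maximality of each selection gives the column bound $\sum_{i=j}^{j'}a_{ij'}^2 = \|u(k_{j'})-\mathcal{P}_{j-1}u(k_{j'})\|^2 \le e_{j-1}(\mathcal{V})^2 \le \gamma^{-2}a_{jj}^2$ for every $j'\ge j$. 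Next I would exploit the Kolmogorov width: choosing a $q$-dimensional subspace $\mathcal{H}$ with $\sup_k\|u(k)-\mathcal{P}_{\mathcal{H}}u(k)\|\le d_q(\mathcal{V})+\epsilon$ and splitting each coordinate vector into its $\mathcal{H}$-component and a remainder of size $O(d_q)$, a Schur/determinantal inequality comparing the product of the $2q$ diagonal entries against a $q$-dimensional approximant yields $e_{2q}(\mathcal{V})^2 \le 2\gamma^{-2}\, e_0(\mathcal{V})\, d_q(\mathcal{V})$, which reduces to the stated bound under the normalization $e_0(\mathcal{V})\le 1$ (equivalently, after rescaling $\mathcal{V}$ into the unit ball of $H$). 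The factor $\sqrt2$ and the doubling $2q\mapsto q$ are the bookkeeping from pairing $2q$ greedy steps against a $q$-dimensional optimal subspace, and $\gamma^{-1}$ propagates linearly out of the weak-greedy lower bounds on the diagonal.

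The main obstacle is the first reduction rather than the algebra: justifying that the discrete, surrogate-driven selection behaves like an honest weak-greedy scheme on the continuous manifold. The projectors $\mathcal{P}_{q,1}$ and $\mathcal{P}_q$ live on different spaces ($H_1$ versus $H$) and are built from the perturbed data $u_1(k_j)$ rather than $u(k_j)$, so one must control $\big|\,\|u_1(k)-\mathcal{P}_{q,1}u_1(k)\|_1 - \|u(k)-\mathcal{P}_q u(k)\|\,\big|$ uniformly in $k$ and in $q\le n$. This uniform consistency is where Lemma \ref{lemma:global-truncation-error} (pointwise convergence $u_1\to u$) and Lemma \ref{lemma:nc-error} (consistency of $\langle\cdot,\cdot\rangle_1$ with $\langle\cdot,\cdot\rangle$) would enter, and it is exactly the discrepancy that \eqref{eq:n-width-assumption} quantifies through the single constant $\gamma$. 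Since the present lemma takes \eqref{eq:n-width-assumption} as a hypothesis, the remaining work is only the Hilbert-space comparison sketched above; verifying that \eqref{eq:n-width-assumption} genuinely holds for a given ODE family, i.e.\ that $\gamma$ stays bounded away from $0$ uniformly in $q$ as $h\to0$, would be the delicate step in any concrete application.
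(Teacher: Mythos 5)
Your proposal matches the paper's treatment: the paper gives no proof of this lemma, citing it directly from \cite{devore_greedy_2013} with only the remark that hypothesis \eqref{eq:n-width-assumption} makes the selection \eqref{eq:k-choice} a weak greedy algorithm, and you likewise reduce to the weak-greedy setting and invoke (indeed, faithfully sketch, via the Gram--Schmidt/triangular-matrix argument) the Hilbert-space estimate $e_{2q}(\mathcal{V}) \leq \sqrt{2}\,\gamma^{-1}\sqrt{d_q(\mathcal{V})}$ of DeVore--Petrova--Wojtaszczyk, including the normalization $e_0(\mathcal{V}) \leq 1$ that the paper leaves implicit. Your closing caveat is also apt and worth keeping: read literally, \eqref{eq:n-width-assumption} bounds a ratio of maxima involving the \emph{discrete surrogate} residual, which does not by itself yield the pointwise weak-greedy property $\left\| u(k_{q+1}) - \mathcal{P}_q u(k_{q+1}) \right\| \geq \gamma\, e_q(\mathcal{V})$ on the continuous manifold that the cited theorem actually requires, so the uniform discrete-to-continuous consistency argument you mention (or a restatement of the hypothesis at the selected points, as in the original multifidelity references) is genuinely needed --- a gloss the paper makes silently in exactly the same way.
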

\noindent
See also \cite{binev_convergence_2011} for related estimates. The condition \eqref{eq:n-width-assumption} ensures that the sequence $k_q$ is generated via a weak greedy algorithm. It is difficult in general to verify the assumption \eqref{eq:n-width-assumption}. However this is required for many computational model reduction methods that utilize snapshots, e.g., the reduced basis method \cite{Maday2002,patera_reduced_2007}, to prove convergence via the strategy in \cite{devore_greedy_2013}. Note that \eqref{eq:k-choice} is exactly the choice we make in \eqref{eq:k-iteration} for the multifidelity approximation. The optimization \eqref{eq:k-choice} above is stated as optimization over the continuum $\mathcal{K}$. In practice optimization is performed on a discretization of $\mathcal{K}$.

Our computations use the discrete projection operators $\mathcal{P}_{n,j}$, so our focus turns now to them. Given $k \in \mathcal{K}$, the projection $\mathcal{P}_{n,j} u_j(k)$ can be written as
\begin{align}\label{eq:ls-system}
  \mathcal{P}_{n,j} u_j(k) &= \sum_{q=1}^n v_{j,q}(k) u_j(k_q), & \bs{v}_j = \argmin_{\bs{w} \in \R^n} \left\| u_j(k) - \sum_{q=1}^n w_q u_j(k_q) \right\|_{j}
\end{align}
The unknown vector of coefficients $\bs{v}_j \in \R^n$ can be computed via the normal equations. To state this, we introduce the kernel functions for the continuous solutions $u(\cdot,k)$, and for the discrete solutions $u_j(\cdot,k)$, $j = 1, 2 \ldots, $
\begin{align*}
  K &: \mathcal{K} \times \mathcal{K} \rightarrow \R, & K_j &: \mathcal{K} \times \mathcal{K} \rightarrow \R \\
  K(k,k') &= \left\langle u(k), u(k') \right\rangle, & K_j(k,k') &= \left\langle u_j(k), u_j(k') \right\rangle_j.
\end{align*}
Then the normal equations formulation of \eqref{eq:ls-system} is
\begin{align}\label{eq:Gj-def}
  \bs{G}_j \bs{v}_j &= \bs{f}_j(k), & (G_j)_{p,q} &= K_j(k_q, k_p), & (f_j)_p(k) &= K_j(k,k_p),
\end{align}
for $p, q = 1, \ldots, n$. For future use, we similarly define the $n \times n$ matrix $\bs{G}$ and $n \times 1$ vector $\bs{f}(k)$ as containing inner products between the two exact solutions:
\begin{align}\label{eq:G-def}
  (G)_{p,q} &= K(k_q, k_p), & (f)_p(k) &= K(k, k_p)
\end{align}
The error relation \eqref{lemma:nc-error} allows us to conclude that evaluations of $K_j$ and $K$ are proximal.
\begin{lemma}\label{lemma:kernel-lemma}
  Under Assumption \ref{assumption:core-assumption}, then for any $k, k' \in \mathcal{K}$:
  \begin{align*}
    \sup_{k,k' \in \mathcal{K}} \left| K_j(k,k') - K(k,k') \right| &\leq C(T,L,U,p) h_j^p.
  \end{align*}
\end{lemma}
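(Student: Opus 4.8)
The plan is to prove the estimate by inserting an intermediate object between the two kernels — the exact solution trajectory \emph{sampled} on the level-$j$ grid — thereby separating two distinct sources of error. Writing $u_{(j)}(k)$ for this sampled exact solution as in \eqref{eq:wj-def}, I would apply the triangle inequality
\begin{align*}
  \left| K_j(k,k') - K(k,k') \right| &\leq \underbrace{\left| \langle u_j(k), u_j(k')\rangle_j - \langle u_{(j)}(k), u_{(j)}(k')\rangle_j \right|}_{(\mathrm{I})} + \underbrace{\left| \langle u_{(j)}(k), u_{(j)}(k')\rangle_j - \langle u(k), u(k')\rangle \right|}_{(\mathrm{II})}.
\end{align*}
Term $(\mathrm{II})$ compares the discrete quadrature inner product against the exact continuous inner product, both applied to the true solution, and is exactly the quantity bounded by Lemma \ref{lemma:nc-error}, giving $(\mathrm{II}) \leq C p^p h_j^p$. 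The remaining work is to control $(\mathrm{I})$, which measures only the numerical integration (global truncation) error and lives entirely inside the single discrete space $H_j$.

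For term $(\mathrm{I})$, set $\delta(k) \coloneqq u_j(k) - u_{(j)}(k) \in H_j$, the global truncation error. Expanding by bilinearity of $\langle\cdot,\cdot\rangle_j$ gives
\begin{align*}
  \langle u_j(k), u_j(k')\rangle_j - \langle u_{(j)}(k), u_{(j)}(k')\rangle_j = \langle \delta(k), u_{(j)}(k')\rangle_j + \langle u_{(j)}(k), \delta(k')\rangle_j + \langle \delta(k), \delta(k')\rangle_j,
\end{align*}
and I would bound each summand by Cauchy--Schwarz in $H_j$. This reduces everything to $H_j$-norm estimates on $\delta$ and on $u_{(j)}$. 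Here the assumptions on the weights are essential: since the $w_{j,i}$ are positive and sum to $1$, the norm $\|\cdot\|_j$ is dominated by the discrete supremum over grid points, so Lemma \ref{lemma:global-truncation-error} upgrades from a pointwise bound to $\|\delta(k)\|_j \leq C(T,L) h_j^p$ uniformly in $k$, while \eqref{eq:Us-def} and \eqref{eq:U-def} yield $\|u_{(j)}(k')\|_j \lesssim U$. Consequently the two cross terms are each $O\!\left(U\, C(T,L) h_j^p\right)$ and the quadratic term is $O\!\left((C(T,L) h_j^p)^2\right) = O(h_j^{2p})$, which is higher order and absorbed into the leading estimate.

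Combining, $(\mathrm{I}) + (\mathrm{II}) \leq C(T,L,U,p) h_j^p$ uniformly in $k, k' \in \mathcal{K}$, which is the claim. The only genuinely delicate point is the passage in term $(\mathrm{I})$ from the pointwise truncation bound of Lemma \ref{lemma:global-truncation-error} to the $H_j$-norm bound on $\delta$; everything else is bilinear bookkeeping and Cauchy--Schwarz. I expect this step to go through cleanly precisely because the quadrature weights are convex (positive and summing to one), so no spurious factors of $N_j$ enter and the pointwise $O(h_j^p)$ rate is preserved in the $H_j$-norm. A minor additional care is tracking the fixed dimension $M$ through the vector-valued norms, but since $M$ is fixed it contributes only to the constant.
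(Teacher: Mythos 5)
Your proposal is correct and follows essentially the same route as the paper's proof: the same intermediate sampled-solution kernel, Lemma \ref{lemma:nc-error} for the quadrature term, and the truncation bound of Lemma \ref{lemma:global-truncation-error} upgraded to an $H_j$-norm bound via the positive, sum-to-one weights, followed by Cauchy--Schwarz. The only cosmetic difference is that you expand term $(\mathrm{I})$ fully in bilinear form (leaving a harmless $O(h_j^{2p})$ quadratic remainder), whereas the paper telescopes so that only two cross terms appear.
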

\begin{proof}
  Let $u_{(j)}(\cdot,k) \in H_j$ be the $h_j$-sampling of the solution $u(\cdot, k)$ as defined in \eqref{eq:wj-def}. 
  Then define the new kernel function
  \begin{align*}
    K_{(j)}(k, k') = \left\langle u_{(j)}(k), u_{(j)}(k') \right\rangle_j
  \end{align*}
  We have,
  \begin{align}\label{eq:lemma-temp1}
    \left| K(k,k') - K_j(k,k') \right| &\leq \left| K(k,k') - K_{(j)}(k,k') \right| + \left| K_{(j)}(k,k') - K_j(k,k') \right|
  \end{align}
  We show that each term on the right-hand side above scales like $h_j^p$. We can immediately bound the first term,
  \begin{align}\label{eq:lemma-temp3}
    \left| K(k,k') - K_{(j)}(k,k') \right| \;\;\;\;\; \leftstackrel{\textrm{Lemma \ref{lemma:nc-error}}}{\leq} C(p) h_j^p.
  \end{align}
  For the second term, define
  \begin{align*}
    e_j(k) \coloneqq u_{(j)}(k) - u_j(k) \in H_j.
  \end{align*}
  By Lemma \ref{lemma:global-truncation-error} and the weight summation condition \eqref{eq:composite-rule},
  \begin{align}\label{eq:ej-bound}
    \left\| e_j(k) \right\|_j \leq C h_j^p.
  \end{align}
  Then the triangle and Cauchy-Schwarz inequalities yield,
  \begin{align}
    \nonumber\left| K_{(j)}(k,k') - K_j(k,k') \right| &\leq \left| \left\langle u_{(j)}(k), e_j(k') \right\rangle_j \right| + \left| \left\langle e_j(k), u_j(k') \right\rangle_j \right| \\
    \label{eq:lemma-temp2} &\leftstackrel{\eqref{eq:ej-bound},\eqref{eq:U-def}}{\leq} C_1 U h_j^p + C_2 U h_j^p \leq C_3(U,T,L) h_j^p
  \end{align}
  Using \eqref{eq:lemma-temp2} and \eqref{eq:lemma-temp3} in \eqref{eq:lemma-temp1} proves the result.
\end{proof}

\subsection{Multifidelity approximations}
Our multifidelity approximations $\widehat{u}_j$ are defined by the linear least-squares solution $\bs{v}_1$ to the $j=1$ version of \eqref{eq:ls-system}. These approximations are, respectively,
\begin{align}\label{eq:multifidelity-approximations}
  \widehat{u}_j(\cdot,k) &= \sum_{q=1}^n v_{1,q}(k) u_j(\cdot,k_q), & j &\geq 1
\end{align}
The ``ideal" function that $\widehat{u}_j(k)$ represents is the $H$-projection approximation $P_{n} u(\cdot,k)$. The pointwise proximity of these two functions on the $h_j$ grid is of the order $h^p$ uniformly in $k$.
\begin{theorem}\label{thm:mf-error}
  Fix $n \in \N$, and assume that $\sigma_{\mathrm{min}}(\bs{G}) > 0$, where $\sigma_{\mathrm{min}}$ is the smallest singular value of $\bs{G}$. Let $w(\cdot,k) \coloneqq P_n u(\cdot, k) \in H$. Then there exists $\widebar{h} > 0$ such that for all $h < \widebar{h}$,
  \begin{align}\label{eq:mf-error}
    \max_{k \in \mathcal{K}}~\max_{m=1, \ldots, M}~\max_{i \in [N_j]} \left| w^{(m)}(i h_j, k) - \widehat{u}^{(m)}_j(i, k) \right| \leq C_1 h^p + C_2 h_j^p,
  \end{align}
  where
  \begin{align*}
    C_1(T, L, p, U, n, \bs{G}) &= C(T,L,p) \frac{U^3 n^2}{\sigma^2_{\mathrm{min}}(\bs{G})}, &
    C_2(T, L, p, U, n, \bs{G}) &= C(T,L,p) \frac{U^2 n}{\sigma_{\mathrm{min}}(\bs{G)}}
  \end{align*}
\end{theorem}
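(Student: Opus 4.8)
The plan is to compare the two functions coefficient-by-coefficient in their respective bases and isolate two independent sources of error. Writing $w(\cdot,k)=\mathcal{P}_n u(\cdot,k)=\sum_{q=1}^n v_q(k)\, u(\cdot,k_q)$ with $\bs{G}\bs{v}=\bs{f}(k)$ the exact normal equations \eqref{eq:G-def}, and $\widehat{u}_j(\cdot,k)=\sum_{q=1}^n v_{1,q}(k)\,u_j(\cdot,k_q)$ with $\bs{G}_1\bs{v}_1=\bs{f}_1(k)$ the level-$1$ system \eqref{eq:Gj-def}, I would decompose the pointwise discrepancy at a grid time $ih_j$ as
\begin{align*}
  w^{(m)}(ih_j,k)-\widehat{u}^{(m)}_j(i,k)
  &= \sum_{q=1}^n \bigl(v_q(k)-v_{1,q}(k)\bigr)\, u^{(m)}(ih_j,k_q) \\
  &\quad + \sum_{q=1}^n v_{1,q}(k)\bigl(u^{(m)}(ih_j,k_q)-u^{(m)}_j(i,k_q)\bigr).
\end{align*}
Call the first sum $(A)$ and the second $(B)$. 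Term $(B)$ holds the coefficients fixed and measures snapshot truncation error at level $j$, so it should produce the $C_2 h_j^p$ term; term $(A)$ holds the (exact, smooth) snapshots fixed and measures the coefficient perturbation, which depends only on level-$1$ quantities and should produce the $C_1 h^p$ term.

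For term $(B)$, Lemma \ref{lemma:global-truncation-error} gives $|u^{(m)}(ih_j,k_q)-u^{(m)}_j(i,k_q)|\le C h_j^p$ uniformly in $k_q,m,i$, so $(B)$ is controlled by $\sum_q |v_{1,q}(k)| \le \sqrt{n}\,\|\bs{v}_1\|_2$. The key estimate is $\|\bs{v}_1\|_2 \le \|\bs{G}_1^{-1}\|_2\,\|\bs{f}_1(k)\|_2 \le \sigma_{\mathrm{min}}(\bs{G}_1)^{-1}\sqrt{n}\,U^2$, since each entry of $\bs{f}_1$ is an inner product bounded by $U^2$. To convert $\sigma_{\mathrm{min}}(\bs{G}_1)$ into $\sigma_{\mathrm{min}}(\bs{G})$ I would invoke Weyl's inequality for the symmetric Gramians together with Lemma \ref{lemma:kernel-lemma}: since $|(\bs{G}_1-\bs{G})_{p,q}|=|K_1(k_q,k_p)-K(k_q,k_p)|\le C h^p$ we get $\|\bs{G}_1-\bs{G}\|_2\le nCh^p$, hence $\sigma_{\mathrm{min}}(\bs{G}_1)\ge \sigma_{\mathrm{min}}(\bs{G})-nCh^p$. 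This is exactly where $\widebar{h}$ enters: choosing $\widebar{h}$ so that $nC\widebar{h}^p\le \tfrac12\sigma_{\mathrm{min}}(\bs{G})$ guarantees $\sigma_{\mathrm{min}}(\bs{G}_1)\ge\tfrac12\sigma_{\mathrm{min}}(\bs{G})>0$ for all $h<\widebar{h}$. Combining yields $(B)\lesssim \frac{U^2 n}{\sigma_{\mathrm{min}}(\bs{G})}\,h_j^p$, matching $C_2$.

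For term $(A)$ I would bound $|u^{(m)}(ih_j,k_q)|$ by the uniform pointwise bound on the solution components (absorbed into $U$), so $(A)\le \sqrt{n}\,U\,\|\bs{v}-\bs{v}_1\|_2$, and then estimate the least-squares perturbation. With $\bs{E}=\bs{G}_1-\bs{G}$ and $\bs{g}=\bs{f}_1-\bs{f}$, the identity
\begin{align*}
  \bs{v}_1-\bs{v} = \bs{G}_1^{-1}\bs{g} - \bs{G}_1^{-1}\bs{E}\,\bs{G}^{-1}\bs{f},
\end{align*}
combined with Lemma \ref{lemma:kernel-lemma} ($\|\bs{E}\|_2\le nCh^p$, $\|\bs{g}\|_2\le\sqrt{n}Ch^p$), the lower bound $\sigma_{\mathrm{min}}(\bs{G}_1)\ge\tfrac12\sigma_{\mathrm{min}}(\bs{G})$, and $\|\bs{f}\|_2\le\sqrt{n}U^2$, gives
\begin{align*}
  \|\bs{v}-\bs{v}_1\|_2 \lesssim \frac{\sqrt{n}\,h^p}{\sigma_{\mathrm{min}}(\bs{G})} + \frac{n^{3/2}\,U^2\,h^p}{\sigma^2_{\mathrm{min}}(\bs{G})},
\end{align*}
whose dominant contribution, after multiplication by $\sqrt{n}\,U$, is $\frac{U^3 n^2}{\sigma^2_{\mathrm{min}}(\bs{G})}\,h^p$, matching $C_1$. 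Adding $(A)$ and $(B)$ and taking the maximum over $k,m,i$ then yields \eqref{eq:mf-error}.

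The main obstacle is the perturbation analysis of the coefficient vector: because the least-squares solution involves the Gramian inverse, a naive bound would entangle the snapshot error and coefficient error, and one must cleanly separate the level-$1$ coefficient perturbation (giving $h^p$) from the level-$j$ snapshot truncation (giving $h_j^p$). The technical heart is the uniform-in-$h$ lower bound on $\sigma_{\mathrm{min}}(\bs{G}_1)$, which is precisely what forces the hypothesis $\sigma_{\mathrm{min}}(\bs{G})>0$ and the existence of the threshold $\widebar{h}$; without it both $\|\bs{v}_1\|_2$ and the perturbation identity could blow up. Everything else is bookkeeping of the constants $n$, $U$, and $\sigma_{\mathrm{min}}(\bs{G})$ through Cauchy--Schwarz and the triangle inequality.
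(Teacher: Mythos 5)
Your proposal is correct and follows essentially the same route as the paper's proof: the identical splitting into coefficient-perturbation and snapshot-truncation terms, the bound $\|\bs{v}_1\| \leq 2\sqrt{n}\,U^2/\sigma_{\mathrm{min}}(\bs{G})$ via the perturbed Gramian, and the choice of $\widebar{h}$ forcing $\sigma_{\mathrm{min}}(\bs{G}_1) \geq \tfrac{1}{2}\sigma_{\mathrm{min}}(\bs{G})$. The only cosmetic difference is that you derive the coefficient-perturbation bound from the explicit resolvent identity $\bs{v}_1-\bs{v} = \bs{G}_1^{-1}\bs{g} - \bs{G}_1^{-1}\bs{E}\bs{G}^{-1}\bs{f}$, whereas the paper invokes the equivalent standard linear-system perturbation estimate with $\eta = \sigma_{\mathrm{min}}(\bs{G}) - \|\bs{G}_1-\bs{G}\|$; both yield the same constants.
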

\begin{proof}
  By Lemma \ref{lemma:kernel-lemma}, then $\left\| \bs{G}_1 - \bs{G} \right\|\rightarrow 0$ as $h \downarrow 0$. Choose $\widebar{h}$ such that
  \begin{align}\label{eq:sv-bounds}
    \left\| \bs{G}_1 - \bs{G} \right\| < \frac{1}{2} \sigma_{\mathrm{min}}(\bs{G}),
  \end{align}
  for all $h < \widebar{h}$. Define the vectors $\bs{v}$ and $\bs{v}_1$ as solutions to the systems
  \begin{align*}
    \bs{G} \bs{v} &= \bs{f}(k), & \bs{G}_1 \bs{v}_1 = \bs{f}_1(k),
  \end{align*}
  where the vectors $\bs{f}_1$, $\bs{f}$, and matrices $\bs{G}$, $\bs{G}_1$ are defined in \eqref{eq:Gj-def} and \eqref{eq:G-def}. Both $\bs{G}$ and $\bs{G}_1$ are invertible since $\sigma_{\mathrm{min}}(\bs{G}) > 0$ and due to \eqref{eq:sv-bounds}. Now note that
  \begin{align}
    \nonumber \left| w^{(m)}( i h_j, k) - \widehat{u}^{(m)}_j(i,k) \right| =& \left|\sum_{q=1}^n  v_q(k) u^{(m)}(i h_j,k_q) - \sum_{q=1}^n v_{1,q}(k) u_j^{(m)}(i, k_q)\right|  \\
                                                                 \nonumber \leq& \left| \sum_{q=1}^n \left( v_q(k) - v_{1,q}(k) \right) u^{(m)}\left(i h_j, k_q\right) \right| + \\
                                                                 \nonumber & \left| \sum_{q=1}^n v_{1,q}(k) \left( u^{(m)}(i h_j, k_q) - u_j^{(m)}(i, k_q) \right) \right| \\
    \label{eq:thm-temp1}\leq & U \sqrt{n} \left\| \bs{v} - \bs{v}_1 \right\| + C h_j^p \sqrt{n} \left\| \bs{v}_1 \right\|,
  \end{align}
  where the last inequality uses Cauchy-Schwarz, and in this proof we use $\|\cdot\|$ to denote the standard Euclidean 2-norm on boldface vectors. Therefore, we need only show that $\left\| \bs{v} - \bs{v}_1 \right\|$ is on the order $h^p$ and that $\|\bs{v}_1\|$ is bounded. We have that
  \begin{align}\label{eq:thm-tempv}
    \left\| \bs{v}_1 \right\| \leq \left\| \bs{G}_1^{-1} \right\| \left\| \bs{f}_1 \right\| \leq \frac{\sqrt{n}}{\sigma_{\mathrm{min}}(\bs{G}_1)} U^2 \leq \frac{2 \sqrt{n}}{\sigma_{\mathrm{min}}(\bs{G})} U^2,
  \end{align}
  where the last inequality holds since by \eqref{eq:sv-bounds},
  \begin{align*}
    \left\| \bs{G}_1 - \bs{G} \right\| < \frac{1}{2} \sigma_{\mathrm{min}}(\bs{G}) \hskip 5pt \Longrightarrow \hskip 5pt \frac{1}{\sigma_{\mathrm{min}}(\bs{G}_1)} < \frac{2}{\sigma_{\mathrm{min}}(\bs{G})}.
  \end{align*}
  For the second term in \eqref{eq:thm-temp1}, Lemma \ref{lemma:kernel-lemma} implies that
  \begin{align}\label{eq:entrywise-proximity}
    \left| (G)_{p,q} - (G_j)_{p,q} \right| &\leq C h_j^p, & \left| (f)_p - (f_j)_p \right| & \leq C h_j^p, & p, q &= 1, \ldots, n.
  \end{align}
  I.e., the vector $\bs{v}_1$ is the solution to a perturbed version of the linear system $\bs{G} \bs{v} = \bs{f}(k)$. We now use a standard result in linear algebra: if $\bs{G}$ is square and invertible, then
  \begin{align*}
    \left\| \bs{v} - \bs{v}_1 \right\| \leq \frac{1}{\eta} \left( \left\| \bs{f}(k) - \bs{f}_1(k) \right\| + \left\| \bs{G} - \bs{G}_1 \right\| \left\| \bs{v} \right\| \right),
  \end{align*}
  where $\eta$ satisfies
  \begin{align*}
    \eta \coloneqq \sigma_{\mathrm{min}}(\bs{G}) - \left\| \bs{G}_1 - \bs{G} \right\| \leftstackrel{\eqref{eq:sv-bounds}}{>} \frac{1}{2} \sigma_{\mathrm{min}}(\bs{G}) > 0.
  \end{align*}
  The entrywise proximity relations \eqref{eq:entrywise-proximity} imply that
  \begin{align*}
    \left\| \bs{G} - \bs{G}_1 \right\| &\leq n C h^p, & \left\| \bs{f} - \bs{f}_1 \right\| &\leq \sqrt{n} C h^p.
  \end{align*}
  We therefore have
  \begin{align}
    \nonumber\left\| \bs{v} - \bs{v}_1 \right\| &\leq \frac{1}{\eta} \left( C \sqrt{n} h^p + C n h^p \|\bs{v}\| \right)  \\
    \label{eq:thm-temp3}&\leq \frac{C h^p}{\eta} \left( \sqrt{n} + n \|\bs{G}^{-1}\| \|\bs{f}\|\right) \leq \frac{C h^p}{\sigma_{\mathrm{min}}(\bs{G})} \left( \sqrt{n} +  \frac{n^{3/2}}{\sigma_{\mathrm{min}}(\bs{G})} U^2 \right).
  \end{align}
  Using \eqref{eq:thm-tempv} and \eqref{eq:thm-temp3} in \eqref{eq:thm-temp1} yields the result. 
\end{proof}
The assumption $\mathrm{\sigma}_{\mathrm{min}}(\bs{G}) > 0$ is equivalent to assuming that the set of $n$ solutions $u(\cdot, \mathcal{K}_n)$ is linearly independent in $H$. The appearance of $1/\sigma_{\mathrm{min}}(\bs{G})$ in \eqref{eq:mf-error} is expected due to worst-case linear system perturbation theory, but since the bound for this term is loose we expect the estimate \eqref{eq:mf-error} to be pessimistic in magnitude. The dependence of $C_1$ on $\sigma^2_{\mathrm{min}}(\bs{G})$ is another worst-case estimate, and is sharp only when $\bs{f}_1(k)$ has large component pointing in the minimal singular direction of $\bs{G}_1$. 



\begin{remark}\label{rem:mf}
  The error in \eqref{eq:mf-error} is the sum of two terms: One term is independent of $j$, and another scales like $h_j^p$. Such an error behavior suggests that we may be able to accelerate convergence to reduce the $h_j^p$ error term by usage of Richardson extrapolation. However, the estimate \eqref{eq:mf-error} suggests that the right-hand side is dominated by the $j$-independent $h^p$ term. A Richardson Extrapolation technique operating on different $j$ levels cannot eliminate this term, and an extrapolated approximation will have error behaving still like $h^p$. For the numerical results we have tested, the $h_j^p$ term appears to dominate the error behavior and so extrapolation techniques are successful. The observed $h_j^p$ dependence may result either from a lack of sharpness of our estimate, or is possibly the result of the particular examples we show and does not hold in general.
\end{remark}
The theorem above relates the error of $\widehat{u}_j$ to $P_n u$. If the assumption of Lemma \ref{lemma:n-width} holds, then we in addition have that the error between $P_n u$ and $u$ is comparable to $\sqrt{d_{n/2}(\mathcal{V})}$.

\section{Convergence acceleration}\label{sec4}
We have discussed computation of the multifidelity approximation $\widehat{u}_j(k)$, which is a member of $H_j$. The goal of this section is to illustrate that convergence of this approximation can be accelerated if we have knowledge of $\widehat{u}_1(k)$, $\widehat{u}_2(k)$, and $\widehat{u}_3(k)$. The cost of obtaining these three solutions (essentially just the cost of $u_1(k)$) is much less than the cost of computing the three solutions $u_1(k)$, $u_2(k)$, and $u_3(k)$ so that the multifidelity procedure can significantly speed up sequence transformation procedures.

\subsection{Connection operators: splines}
The multifidelity approximation $\widehat{u}_j$ that we have constructed lives in $H_j$. We desire a method to ``lift" this to $H$. Because of our smoothness assumptions, we turn to B-splines to accomplish this. The multifidelity reconstruction $\widehat{u}^{(m)}_j(\cdot,k)$ is a vector in $\R^{N_j+1}$ with data associated to time instances
\begin{align*}
  t_{j,i} &= i h_j, & i &\in [N_j].
\end{align*}
For a fixed multifidelity level $j$, and fixed time-stepping order of accuracy $p > 0$, we define a knot vector $\xi_i$ for use in spline construction. The first $p$ knots coincide, followed by equispaced knots, followed again by coincident knot values:
\begin{align}\label{eq:knots}
  \xi_i &= \left\{ \begin{array}{rrrl} 0, & 0 \leq &i& \leq p \\
                                    \frac{i-p}{N_j-p}, & p+1 \leq &i& \leq N_j+p+1 \\
                                    1, & N_j + p + 2 \leq &i& \leq N_j + 2p + 2.
  \end{array}\right.
\end{align}
In one dimension, basis splines (B-splines) are defined recursively using a knot vector, starting with piecewise constants
\begin{align*}
B_{i,0} (\xi) = \begin{cases}
1 \qquad \textrm{if} \quad  \xi_i \leq \xi < \xi_{i+1}\\
0 \qquad \textrm{otherwise}
\end{cases}
\end{align*}
for $p=0$ and
\begin{align}\label{bspline1}
B_{i,p} (\xi) = \frac{\xi - \xi_i}{\xi_{i+p} - \xi_i} B_{i,p-1}(\xi) +  \frac{\xi_{i+p+1} - \xi}{\xi_{i+p+1} - \xi_{i+1}} B_{i+1,p-1}(\xi)
\end{align}
for $p>0$. We choose the uniform knots \eqref{eq:knots} because our data lies on a uniform time grid.
Then given a fixed $k \in \mathcal{K}$, we can use the discrete-time approximation $\widehat{u}_j(k)$ as data in a global B-splines approximation problem:
\begin{align}\label{spline-coef}
  \widehat{u}^{(m)}_j(i, k) &= \displaystyle \sum_{s \in [N_j]}  \alpha_i B_{s,p}(t_{j,i}/T), & \alpha_0 &=  \widehat{u}^{(m)}(t_0,k), & \alpha_{N_j} &= \widehat{u}^{(m)}(t_{N_j},k),
\end{align}
for $1 \leq i \leq N_j-1$. The above system represents $N_j+1$ equations in $N_j+1$ unknowns $\alpha_i$, which can be solved. Once the $\alpha_i$ coefficients are known, we can form the spline approximation
\begin{align}\label{eq:spline-approximation}
  \widehat{w}^{(m)}_j(t,k) &= \sum_{i \in [N_j]}^n \alpha_i B_{i,p}(t/T), & 0 &\leq t \leq T.
\end{align}
Repeating this for $m = 1, \ldots, M$, we can create a continuous approximation $\widehat{w}_j(\cdot,k) \in H$ to the discrete multifidelity data $\widehat{u}_j(\cdot,k) \in H_j$ at any time value $t \in [0,T]$, and this approximation is accurate to order $p$.

\subsection{Richardson Extrapolation}
With $w = P_n u$, Theorem \ref{thm:mf-error} states that the multifidelity approximations $\widehat{u}_j$ satisfy
\begin{align}\label{eq:uj-estimate}
  \widehat{u}_j(i,k) &\simeq w(i h_j, k) + C_1 h^p + \frac{C_2}{r^{p(j-1)}} h^p,
\end{align}
This suggests that the use of sequence transformations operating on the index $j$ to accelerate convergence may be effective \cite{STOER1966,WENIGER1989189}. We adopt Richardson extrapolation in particular, though there are many other extrapolation procedures \cite{Joyce71,Brezinski96,Brezinski2000}. Throughout this section, we use $x_j$ to denote a generic scalar in the spline-postprocessed multifidelity sequence. I.e., for some fixed $t \in [0, T]$, $k \in \mathcal{K}$, and $m = 1, \ldots, M$, we let
\begin{align*}
  x^{(m)}_j \coloneqq \widehat{w}^{(m)}_j(t, k) \approx w^{(m)}(t, k) + C_1 h^p + C_1 r^{(1-j)p} h^p.
\end{align*}
Although we know the convergence order $p$ from an \textit{a priori} understanding of the time integration method, we use three levels to estimate this order. The Richardson extrapolation formula for estimating the order and the resulting extrapolation is, respectively,
\begin{align}\label{eq:p-estimation}
  p^\ast &= \log_r \left( \frac{x_1 - x_2}{x_2 - x_3} \right), & x^\ast &= \frac{r^{p^\ast} x_{3} - x_2}{r^{p^\ast} - 1} = c^\ast x_3 + (1-c^\ast) x_2,
\end{align}
where we have defined $c^\ast \coloneqq r^{p^\ast}/(r^{p^\ast}-1)$. With this order of accuracy in hand, the accelerated estimate $x^\ast$ is our final computed solution. For arbitrary $t \in [0,T]$, $k \in \mathcal{K}$, and $m = 1, \ldots, M$, the above procedure can be repeated on the sequence $x_j$ defined below, producing the estimate $x^\ast$:
\begin{align}\label{eq:sequence-transformation}
  x_j &= \widehat{w}^{(m)}_j(t, k), & \widehat{w}^{\ast(m)}(t,k) \coloneqq x^\ast \approx w^{(m)}(t, k).
\end{align}
The final output of our algorithm is $\widehat{w}^\ast(\cdot,k) \in H$. Note that, on account of the behavior \eqref{eq:uj-estimate}, we only expect $x^\ast$ to approximate $w$ with an error of $h^p$. However, we will see in our numerical results section that accuracy of order $p+1$ is observed.

\subsection{Computational Complexity of the Trifidelity Construction}\label{sec:complexity}
This section summarizes the entire algorithm. The simulation cost for the convergence-accelerated trifidelity algorithm is divided in two main parts:
\begin{itemize}
  \item Offline Stage -- Identification of $\mathcal{K}_n$ and computation of $u_2(\cdot,\mathcal{K}_n)$ and $u_3\left(\cdot,\mathcal{K}_n\right)$.
  \begin{itemize}
    \item Low fidelity computations: compute $Q \gg 1$ simulations of $u_1$
    \item Important sample selection: Approximate the $j=1$ optimization in \eqref{eq:k-choice} using the $Q$ simulations of $u_1$ via a pivoted Cholesky decomposition. This requires $\mathcal{O}(Q n^2)$ operations. The pivots identify $\mathcal{K}_n = \{k_1, \ldots, k_n\} \subset \mathcal{K}$.
    \item Medium and high fidelity computations: $n$ simulations of $u_2$ and $u_3$ at parameter locations $\mathcal{K}_n$.
  \end{itemize}
  \item Online Stage -- given $k$, compute $\widehat{w}^\ast(t,k)$, an approximation to $u(t,k)$.
  \begin{itemize}
    \item Low fidelity computation: Evaluate $u_1$ at parameter value $k$
    \item Multifidelity interpolation operator: compute medium- and high-fidelity approximations $\widehat{u}_2$ and $\widehat{u}_3$ in \eqref{eq:multifidelity-approximations}, involving the solution to the $j=1$ version of \eqref{eq:Gj-def}. The cost of this construction is dominated by the cost of a least-squares solve on the low-fidelity mesh, and hence requires $\mathcal{O}(N_1 n^2)$ operations.
    \item Spline interpolation: For $j = 1, 2, 3$, solve \eqref{spline-coef} to obtain the spline approximation \eqref{eq:spline-approximation} to obtain $\widehat{w}_j$.
    \item Sequence transformation: use \eqref{eq:p-estimation} and \eqref{eq:sequence-transformation} to compute the estimator $\widehat{w}^\ast$ at parameter value $k$, which can be evaluated at any $t \in [0, T]$.
  \end{itemize}
\end{itemize}
The major computational burden is only in the computation of $n$ medium and high fidelity solutions, which is a one-time (``offline") cost. Once this cost has been invested, one may compute the accelerated multifidelity estimator $\widehat{w}$ at the cost of only the low-fidelity model $u_1$.

\section{Numerical results}\label{sec:results}
We use this section to demonstrate the effectiveness of the accelerated multifidelity procedure. We wish to illustrate that one can use quite general time integrators. To this end, we will use the standard second-, third-, and fourth-order Runge-Kutta (RK, multi-stage) and Adams-Bashforth (AB, multi-step) schemes. 
The Runge-Kutta  schemes RK2, RK3, and RK4 are second-, third-, and fourth-order globally accurate, respectively, and similarly for the Adams Bashforth schemes, which we denote AB2, AB3, and AB4.

\subsection{Damped harmonic oscillator}
In this section we consider a second-order linear parameterized ODE and demonstrate different steps of the convergence acceleration algorithm on this illustrative example. The linear ODE model of a particular unforced mass-spring-damper is
\begin{equation}\label{linear_ODE}
  \ddot{u} + (0.1 + k/100) \dot{u} + k u = 0, \quad u(0)=1,~ \dot{u}(0)=10,
\end{equation}
where $k \in [5, 25]$ is the parameter, and hence the stiffness coefficient and the damping coefficient for the system are dependent parameters.

\subsubsection{Multifidelity approximations}
We construct three different approximations, $u_1, u_2$ and $u_3$ associated with time step values $h_1=0.1, h_2=0.05$ and $h_3=0.025$, respectively, and run the multifidelity procedure to construct a numerical approximation to the solution $u$ for arbitrary $k \in [5, 25]$. Different solution realizations obtained with the solver $RK4$ on the low fidelity model are shown in Figure~\ref{fig_1}. 
\begin{figure}[h]
\centering
\includegraphics[width=0.8\textwidth]{./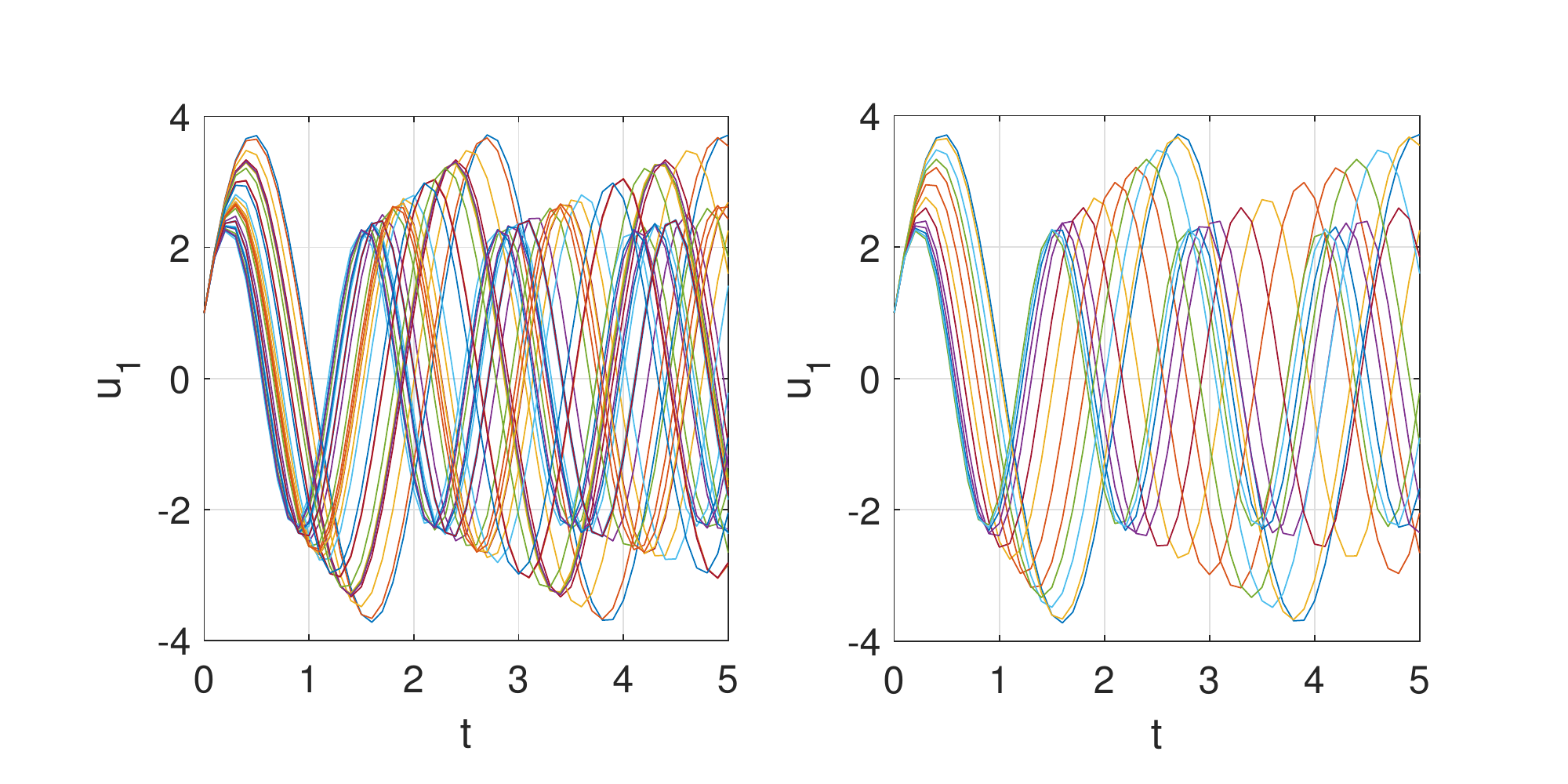}
  \caption{\small{Ensemble of solution realizations of the ODE \eqref{linear_ODE} using a low-fidelity RK4 solver. Left: an ensemble of low-fidelity trajectories. Right: $n=13$ trajectories identified via a pivoted Cholesky decomposition.}}\label{fig_1}
\end{figure}
The first step of the multifidelity procedure is to collect solution trajectories $u_1$ for many values of $k$. We choose $100$ values of $k\in [5,25]$ via Monte Carlo Sampling. We use this size-$100$ grid as a proxy for the continuum in the optimization problem \eqref{eq:k-choice} (i.e., $Q$ from Section \ref{sec:complexity} is set to 100); this results in $n=13$ parameter values $k_1, \ldots, k_n$ along with medium- and high- fidelity solutions computed on these parameter values.

\subsubsection{Sequence transformation and acceleration}
We investigate the convergence order of $\widehat{u}_3(k)$ at $t = 2.5$, for the two values $k=11$ and $k = 16$. The convergence order $p^\ast$ is estimated via \eqref{eq:p-estimation}, with $x_j$ being reconstructed multifidelity solutions $\widehat{u}_j(k)$ at the fixed time instance $t=2.5$, and this $p^\ast$ is used for all time $t$. The computed values of $p^\ast$ for particular parameter values $k$ are given in Table \ref{tabNE2_30}. The convergence order $p^\ast$ mirrors the order of the convergence $p$ of the time-stepping algorithm, regardless of whether a multi-stage (Runge-Kutta) or multistep (Adams-Bashforth) scheme is used.

\begin{table}[!h]
\caption{Convergence Order Estimation }
\resizebox{\textwidth}{!}{
\centering
\begin{tabular}{|c |c c| c c| c c| c c| c c| c c| }
\hline\hline
\textrm{Time-Stepping Method}   & \multicolumn{2}{ c| }{RK2} & \multicolumn{2}{ c| }{AB2} & \multicolumn{2}{ c| }{RK3} & \multicolumn{2}{ c| }{AB3} & \multicolumn{2}{ c| }{RK4} & \multicolumn{2}{ c| }{AB4}   \\
\hline
$k$   & 11 & 16 & 11  & 16 & 11 & 16  & 11 & 16 & 11 & 16 & 11 & 16   \\
$p^\ast$  & 2.20 & 1.97 & 2.19 & 2.04 & 3.01 & 3.29  & 2.97 & 3.36 & 4.38 & 3.95 & 4.31 & 3.85    \\
\hline
\end{tabular}
}
\label{tabNE2_30}
\end{table}

Once the multifidelity solutions $\widehat{u}_2$ and $\widehat{u}u_3$ solutions are built, these solutions are interpolated with an order-$p$ spline, where $p$ is again the order of the time integration method. Figure~\ref{fig_3} shows the spline curve $\widehat{w}_2$ and $\widehat{w}_3$ computed from the multifidelity data $\widehat{u}_2$ and $\widehat{u}_3$, respectively. For better resolution only results for $t \in [0,1]$ are visualized.

\begin{figure}[h]
\centering
\includegraphics[width=0.9\textwidth]{./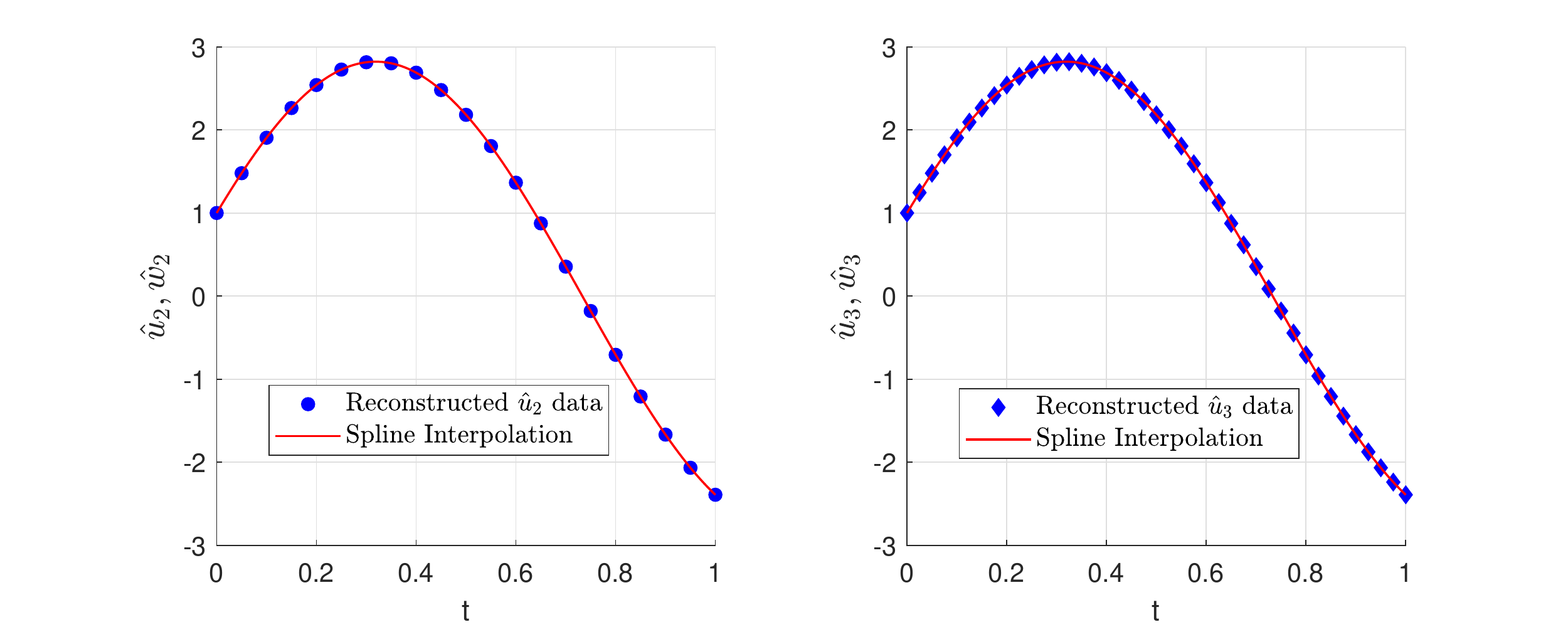}
\caption{\small{Interpolation with high order spline for reconstructed $\hat{u}_2$ and $\hat{u}_3$ solutions. Note that we do not use $\hat{u}_1$ for the convergence acceleration.}}\label{fig_3}
\end{figure}

For a given parameter $k$ and time instance $t$, the spline-reconstructed medium and high fidelity solutions $\hat{w}_2$ and $\hat{w}_3$ are used to obtain the extrapolated solution $\widehat{w}^\ast$ via
\begin{align}\label{eq:richardson-extrapolation}
  \widehat{w}^\ast = c \widehat{w}_3 - (c-1) \widehat{w}_2
\end{align}
This equation is equivalent to Equation~\eqref{eq:p-estimation} with $c=r^p/(r^p-1)$. In this example, $r=2$ since the timestep is halved between fidelities ($2 h_{j+1} = h_j$). We can explicitly compute the Richardson Extrapolation weights for $p=2,3,4$:
\begin{align*}
  p^\ast = 2 &\Longrightarrow c^\ast = \frac{4}{3} \approx 1.33, & p^\ast = 3 &\Longrightarrow c^\ast = \frac{8}{7} \approx 1.14, & p^\ast = 4 &\Longrightarrow c^\ast = \frac{16}{15} \approx 1.06.
\end{align*}

We test the accuracy of this approach for different values of $c$; based on our convergence theory, if our spline approximation is of the appropriate order, then we expect that $c = c^\ast$ will produce the best results (lowest error). We can confirm this behavior in Figures \ref{fig_4} and \ref{fig_5}. The relative error is shown for solvers of different convergence orders ($RK2$, $RK3$, and $RK4$ for Figure \ref{fig_4}, and $AB2$, $AB3$, and $AB4$ for Figure \ref{fig_5}), and different orders of spline interpolation are used. Relative error is measured in the normalized $\ell^2$ norm of the vector of values on a fine grid with stepsize $h = 10^{-3}$.

When the spline order of convergence is greater than or equal to the order of the convergence of the time-stepping algorithm, we expect $c = c^\ast$ to produce the best error. This expectation is realized in these Figures: increasing the spline order to the time-stepping order $p$ causes the minimum error to happen at $c = c^\ast$; increasing the spline order beyond this produces no change since the error is then limited by the time-stepping scheme.

We emphasize that these experiments are testing more than simply ``standard" Richardson Extrapolation: they are also verifying the convergence rate of the multifidelity approximation given in Theorem \ref{thm:mf-error}. The difference between a ``standard" Richardson Extrapolation technique and this multifidelity technique is that the ``standard" approach requires solutions $u_2$ and $u_3$, which are relatively expensive. In contrast, the multifidelity procedure requires only the surrogates $\widehat{u}_2$ and $\widehat{u}_3$, which can be generated at the cost of the much more inexpensive model $\widehat{u}_1$. (After some offline work has been invested, see Section \ref{sec:complexity}.)

\begin{figure}[h]
\includegraphics[width=0.9\textwidth]{./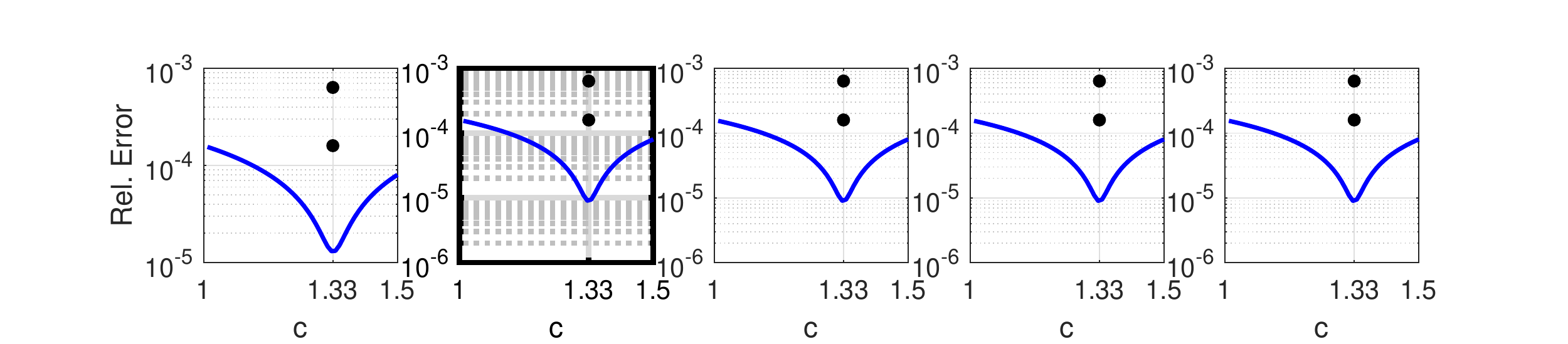}
\includegraphics[width=0.9\textwidth]{./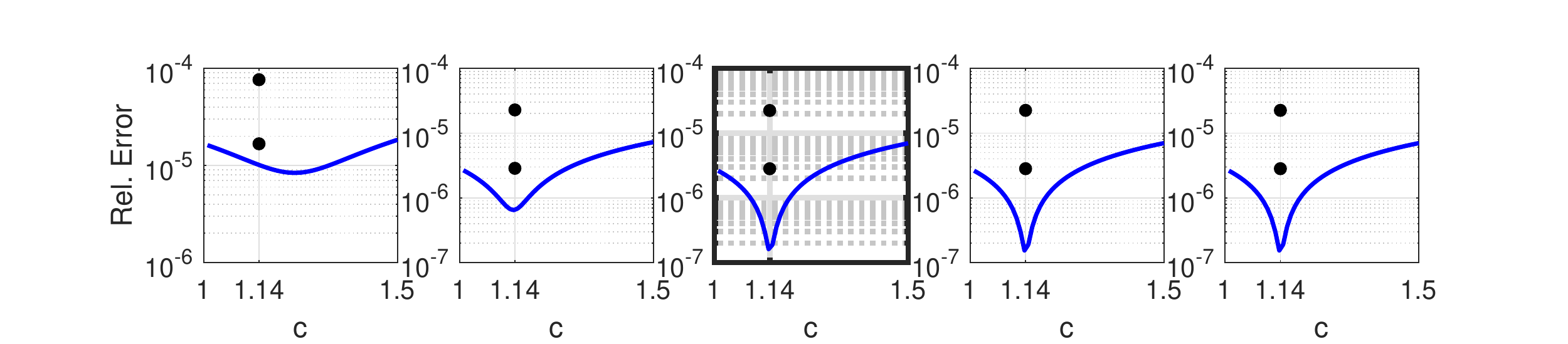}
\includegraphics[width=0.9\textwidth]{./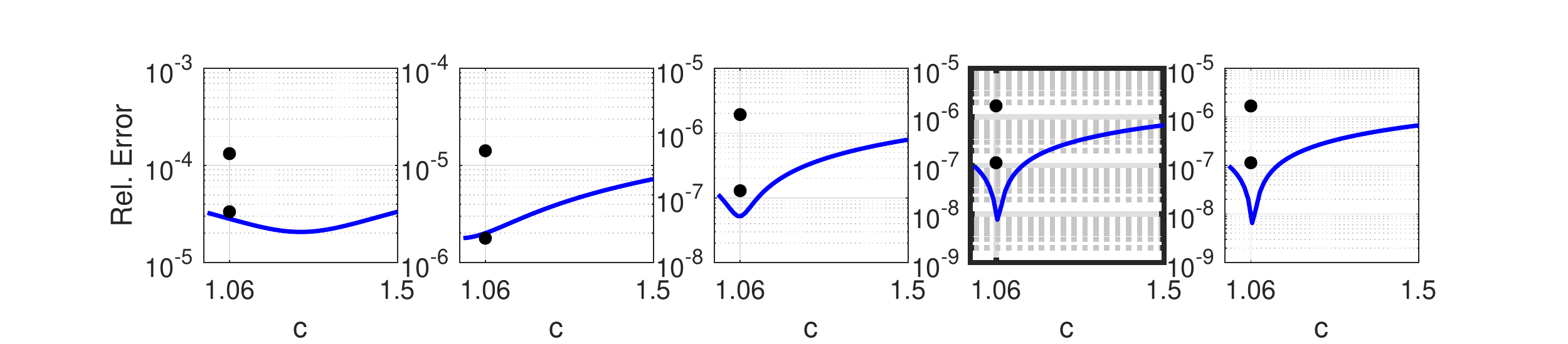}
  \caption{\small{Performance of extrapolation weights $c$ in \eqref{eq:richardson-extrapolation} for different Runge-Kutta schemes and spline interpolation orders: Second, third and forth order RK (top to bottom rows), Spline degrees $p=1,\ldots,5$ (left to right). The value $c^\ast$ is marked on each horizontal axis in the plots. As we proceed left-to-right in each row, we expect that $c= c^\ast$ produces the optimal (smallest) error when the spline order matches the time-stepping order; this happens in the second, third, and fourth columns of rows 1, 2, and 3, respectively. If we increase the spline order beyond the time-stepping order, no change should be observed since the error rate of the multifidelity spline approximations is then limited by the time-stepping error. The black dots represent the errors $||{\hat{w}_2} - u ||_2$,~$||{\hat{w}_3} - u ||_2$ and the blue line is obtained by varying $c$ in $||{\hat{w}^*(c)} - u ||_2$ cf. Equation~\eqref{eq:richardson-extrapolation}. We plot the black dots at the abscissa corresponding to $c = c^\ast$ to better visualize the magnitude of accuracy enhancement. The figures with bold axis lines correspond to those where the time integration order matches the spline order.}}\label{fig_4}
\end{figure}
%
\begin{figure}[h]
\includegraphics[width=0.9\textwidth]{./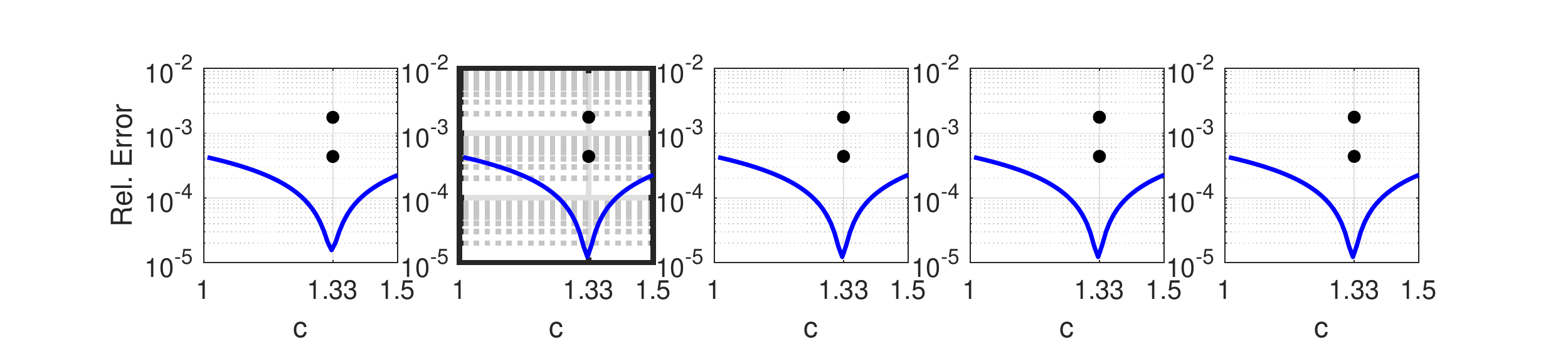}
\includegraphics[width=0.9\textwidth]{./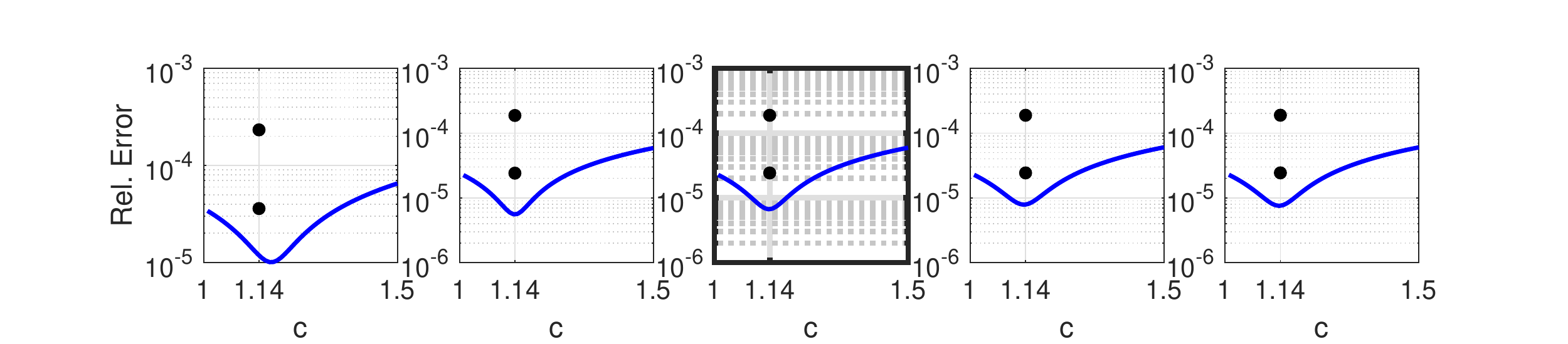}
\includegraphics[width=0.9\textwidth]{./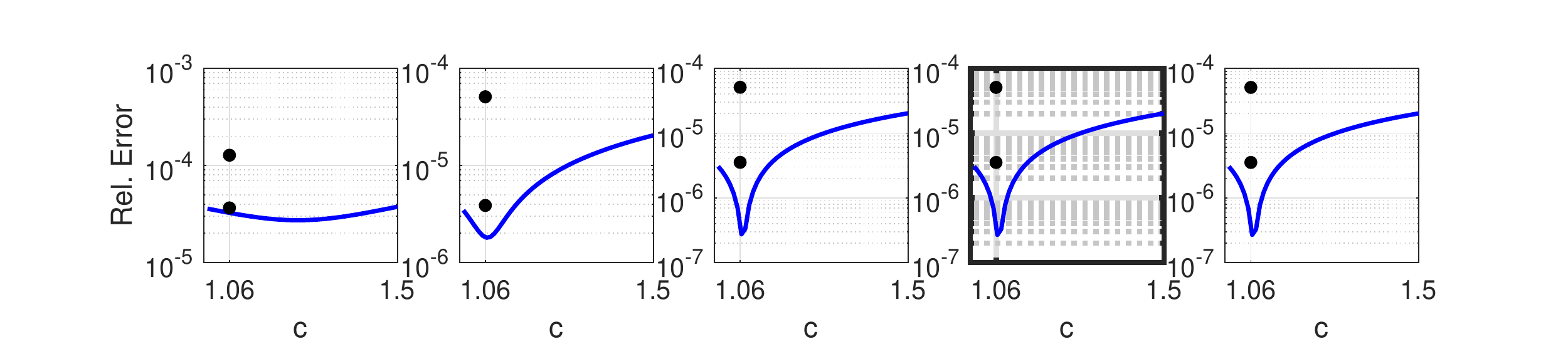}
\caption{\small{Performance of extrapolation weights $c$ in \eqref{eq:richardson-extrapolation} for different Adams-Bashforth schemes and spline interpolation degrees: Second, third and forth order AB (top to bottom rows), Spline degrees $p=1,\ldots,5$ (left to right). See Figure \ref{fig_4}'s caption for a more detailed description.}}\label{fig_5}
\end{figure}

Finally we numerically investigate the order of convergence for the convergence-accelerated multifidelity estimator $\widehat{w}^\ast$ and compare it to the expected order of accuracy, $p$. We observe in Figure \ref{fig:convrate} that convergence rates of order $p+1$ are observed. Owing to Theorem \ref{thm:mf-error}, the multifidelity approximations $\widehat{u}_j$ have error scaling like $h^p( 1 + r^{1-j})$. While a Richardson Extrapolation technique can eliminate the $h^p r^{1-j}$ term, we do not expect that it eliminates the $h^p$ term. We believe this discrepancy in theory and results is due to the explanation in Remark \ref{rem:mf}, i.e., that our estimate in Theorem \ref{thm:mf-error} resulting in a non-$j$-dependent $h^p$ term is a loose bound. As a consequence, we numerically observe order-$(p+1)$ convergence for the accelerated solution instead of the theoretically-expected order-$p$ convergence.

\subsubsection{Statistical moments}\label{sssec:stats}
Under the same model \eqref{linear_ODE} we interpret $k$ as a random variable, uniformly distributed over $[5, 25]$. We can then use the multifidelity procedure to estimate statistical moments of $u(k, t)$ as a function of $t$. In all cases, our statistics are computed numerically using a size-1000 ensemble of Monte Carlo values of $k$. The trajectories of the exact mean and standard deviation of $u$ are shown in the left-hand pane of Figure \ref{fig:meanstd}. In the right-hand pane, we compute the statistical moment error using the multifidelity surrogate $\widehat{w}^\ast$, and compare them against the statistical moment errors computed using the original models $u_1$, $u_2$, and $u_3$. 

There are two ways to compute the moments for the $j=3$ solution: via repeated query of $u_3(\cdot,k)$, or via repeated query of the convergence-accelerated multifidelity approximation $\widehat{w}^\ast(\cdot,k)$. Suppose $X$ is the cost of computing a single solution for $u_3(\cdot, k)$. Then the cost of moments via $u_3$ is $1000 X$. However, the cost of a single online stage query of $\widehat{w}^\ast$ is $X/4$. The offline stage requires $100$ low-fidelity simulations ($100 X/4$), 13 medium-fidelity simulations ($13 X/2$), and 13 high-fidelity simulations ($13 X$). The cumulative cost of $\widehat{w}^\ast$ is thus $295 X$, which is much smaller than the $1000 X$ cost of $u_3$, and $\widehat{w}^\ast$ is also about an order of magnitude more accurate.


\begin{figure}[h]
\includegraphics[width=0.9\textwidth]{./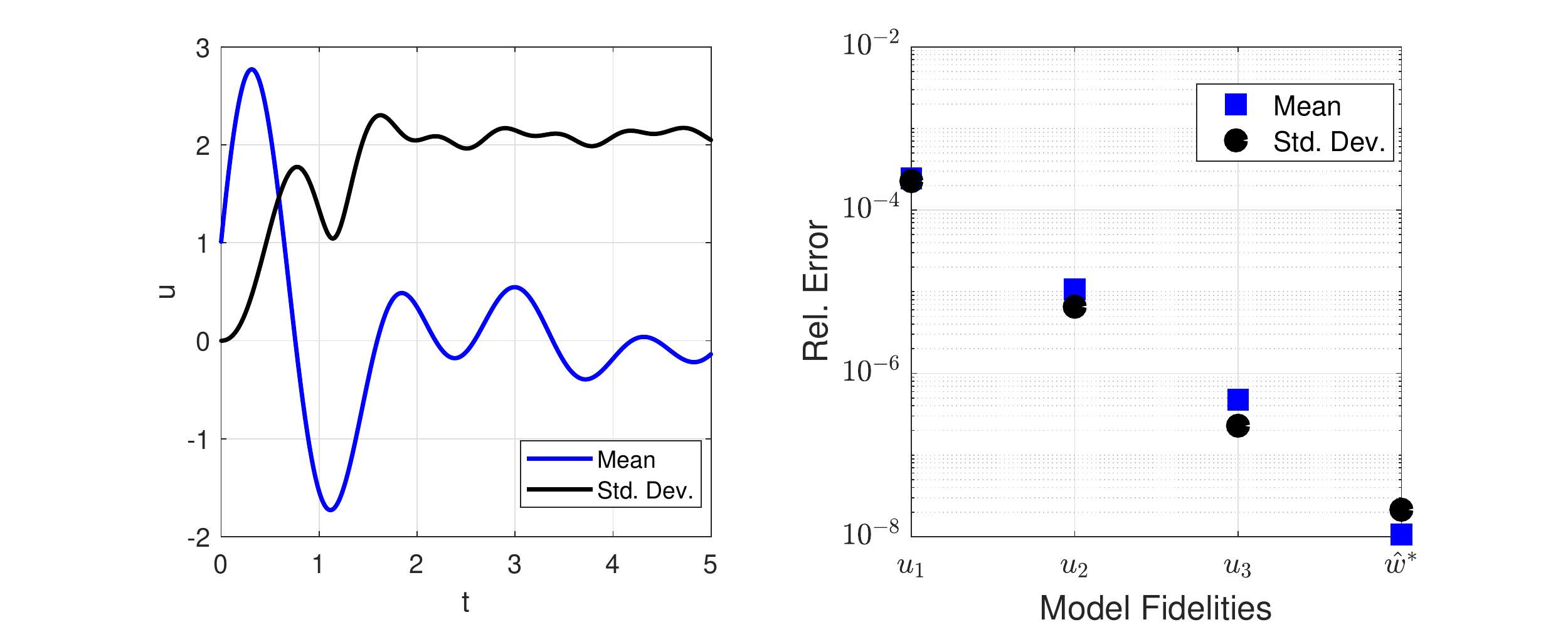}
  \caption{\small{Mean and standard deviation of exact ODE solution (left). Error of convergence-accelerated multifidelity approximation $\widehat{w}^\ast$ vs error from using low-, medium-, or high-fidelity approximations $u_1$, $u_2$, and $u_3$, respectively. Note that the cost of computing the moments of $\widehat{w}^\ast$ is much cheaper than computing the moments of $u_2$ and $u_3$, requiring a total of only 13 medium- and high-fidelity solutions..}}\label{fig:meanstd}
\end{figure}


\begin{figure}[h]
\includegraphics[width=0.9\textwidth]{./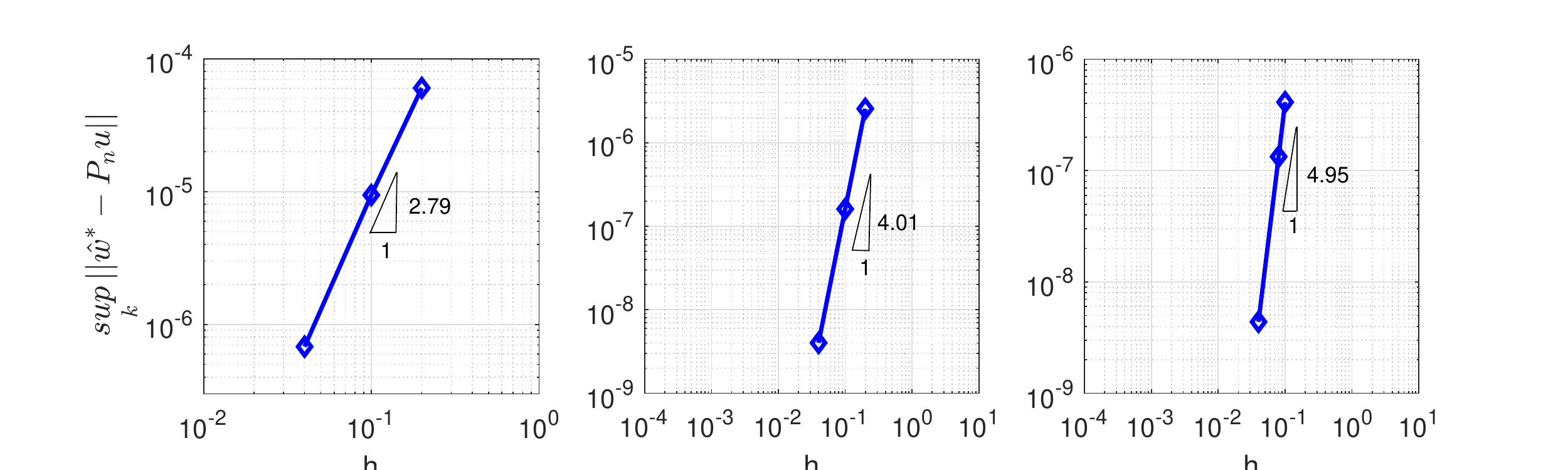}
\caption{\small{Convergence rate corresponding to the slope $p+1$ in the theoretical estimate: $p+1=2.79, 4.01, 4.95$ for RK2 (left), RK3 (middle) and AB4 (right). The $sup$ is taken over $100$ equally spaced points of $k \in [5,25]$.}}\label{fig:convrate}
\end{figure}

\subsection{Nonlinear ODE: Predator-Prey Equations}

We now consider the Lotka-Volterra predator-prey equations. The set of equations is comprised of nonlinear ODEs that are primarily used to describe simplified dynamics of biological systems. The evolution of population for prey and predator species $x(t)$ and $y(t)$, respectively, is modeled as
\begin{align*}
\begin{array}{l}
\vspace{0.1cm}
\displaystyle \frac{\mathd x}{\mathd t}= a x- b xy, \quad x(0)=x_0\\
\displaystyle \frac{\mathd y}{\mathd t}=cxy - d y, \quad y(0)=y_0
\end{array}
\end{align*}
for an initial population $(x_0, y_0) = (1,1)$. We parameterize the positive constants $a, b, c,$ and $d$ by
\begin{align*}
\begin{array}{l l}
\vspace{0.1cm}
a = k + 0.5, & b= 3k+1\\
c = k + 1, & d= k+0.5\\
\end{array}
\end{align*}
where $k$ is a parameter taking values in the interval $[0.5, 1.5]$. In this example since we do not have the analytical solution we solve the nonlinear ODE on a fine grid $h=10^{-3}$ and use that as an ``exact" solution to investigate convergence. 




In Figure \ref{fig_6_1} we show convergence of the unaccelerated multifidelity solutions, $\widehat{u}_j$, as a function of the number of high-fidelity solutions, $n$.  We observe that the error reaches an asymptotic limit as number of higher fidelity solutions are increased; this is expected since for small values of $n$, the time integration error $C h^p$ is greater than the $n$-term projective error $\| u - \mathcal{P}_n u \|$.
\begin{figure}[h]
\centering
\includegraphics[width=0.8\textwidth]{./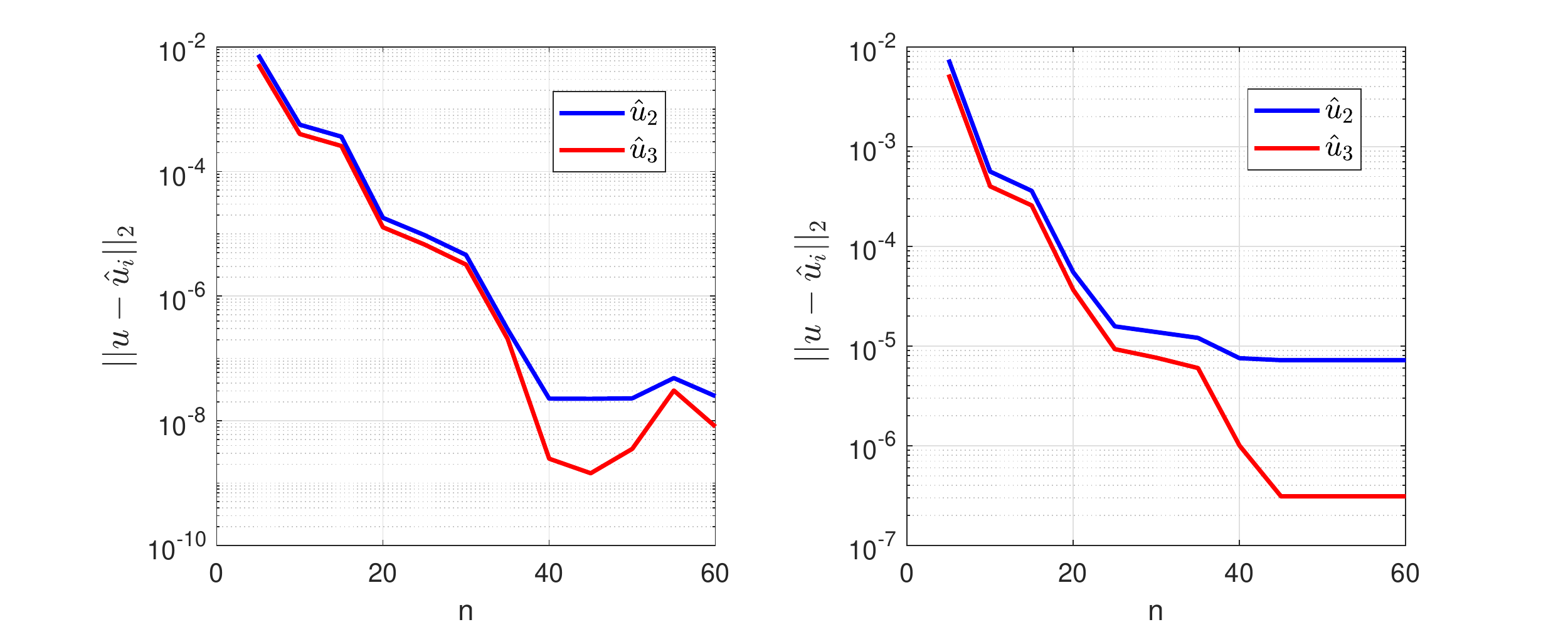}
  \caption{\small{Decay of error between the reconstructed multifidelity and exact solutions with respect to number of higher fidelity solutions: RK4 (left), AB4 (right). }}\label{fig_6_1}
\end{figure}

Figure \ref{fig:pp_convrate} computes numerically-observed rates of convergence for the convergence-accelerated surrogate. We again observe order-$(p+1)$ convergence, despite our theory-based order-$p$ expectation. We again attribute this to the loose bound in our theoretical estimate, as described at the end of Section \ref{sssec:stats}.
\begin{figure}[h]
\includegraphics[width=0.9\textwidth]{./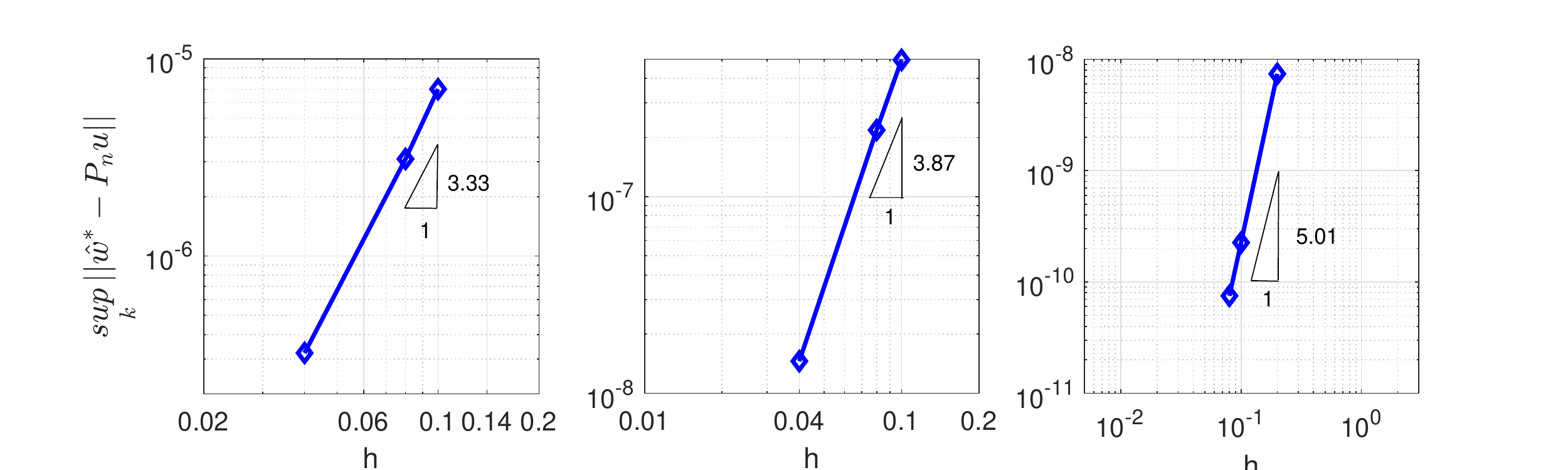}
\caption{\small{Convergence rate corresponding to the slope $p+1$ in the theoretical estimate: $p+1=3.33, 3.87, 5.01$ for AB2 (left), AB3 (middle) and RK4 (right). The $sup$ is taken over $100$ equally spaced points of $k \in [0.5,1.5]$.}}\label{fig:pp_convrate}
\end{figure}

\vspace{-0.3cm}
\section{Concluding Remarks}\label{sec5}

A numerical method for leveraging time-dependent multifidelity models under parametric uncertainty is presented. We built interpolation operators on the inexpensive low-fidelity solution in parameter space, and estimated higher fidelity solutions corresponding at arbitrary parameter locations using the same interpolation rule associated with the low-fidelity solution. We chain this multifidelity procedure together with classical sequence transformation, in particular Richardson extrapolation: Having knowledge of solutions at different fidelity levels allows us to estimate the convergence order
and build a sequence transformation operator that attains superior accuracy compared to the standard multifidelity surrogate.



\end{document}